\documentclass[12pt,reqno]{amsart}

\usepackage[margin=3.5 cm]{geometry}
\usepackage{graphicx, mathabx}
\usepackage{color}
\newtheorem{theorem}{Theorem}[section]
\newtheorem*{theorem*}{Theorem}   
\newtheorem{prop}{Proposition}[section]

\newtheorem{lemma}[theorem]{Lemma}

\theoremstyle{definition}

\theoremstyle{remark}
\newtheorem{remark}[theorem]{Remark}

\usepackage[utf8]{inputenc}
\usepackage[english]{babel}
 
\usepackage{amsthm}


\newcommand{\ud}{\mathrm{d}}

\newtheorem{hyp}{Hypothesis}

\begin{document}

\title[Decay of Eigenfunctions on a bounded below potential]{Agmon-type decay of eigenfunctions for a class of Schr\"{o}dinger operators with non-compact classically allowed region}

\author[C.\ A.\ Marx]{Christoph A.\ Marx}
\address{Department of Mathematics,
Oberlin College,
Oberlin, Ohio 44074, USA}
\email{cmarx@oberlin.edu}

\author[H. \ Zhu ]{Hengrui Zhu}
\address{Oberlin College, Oberlin, Ohio 44074, USA}
\email{hzhu@oberlin.edu}

\maketitle
\begin{abstract}
An important result by Agmon implies that an eigenfunction of a Schr\"{o}dinger operator in $\mathbb{R}^n$ with eigenvalue $E$ below the bottom of the essential spectrum decays exponentially if the associated classically allowed region $\{x \in \mathbb{R}^n~:~ V(x) \leq E \}$ is compact. We extend this result to a class of Schr\"{o}dinger operators with eigenvalues, for which the classically allowed region is not necessarily compactly supported: We show that integrability of the characteristic function of the classically allowed region with respect to an increasing weight function of bounded logarithmic derivative leads to $L^2$-decay of the eigenfunction with respect to the same weight. Here, the decay is measured in the Agmon metric, which takes into account anisotropies of the potential. In particular, for a power law (or, respectively, exponential) weight, our main result implies that power law (or, respectively, exponential) decay of ``the size of the classically allowed region'' allows to conclude power law (or, respectively, exponential) decay, in the Agmon metric, of the eigenfunction.
\end{abstract}

\vspace{0.2 cm}
\section{Introduction}

In a series of lectures given in 1980 at the University of Virginia, later published as \cite{Agmon_lectures1982}, S. Agmon proposed his now celebrated method for proving exponential decay of eigenfunctions for Schr\"odinger type operators $H=-\Delta + V$ in $\mathbb{R}^n$ for eigenvalues below the bottom of the essential spectrum. One of the main accomplishments of Agmon's approach was to account for {\em{anisotropies}} of the potential function $V$ captured through a pseudo-metric, now known as the {\em{Agmon metric}}; see (\ref{eq:agmondef}) below. While the spectral condition on the eigenvalue $E$ can be expressed more generally as {\em{Agmon's $\lambda$-condition}} (see (1.7) in \cite{Agmon_lectures1982}), it is typically formulated by assuming compactness of the classically allowed region $\{x \in \mathbb{R}^n ~:~ V(x) \leq E + \delta \}$, for some $\delta > 0$. In this situation, Persson's characterization of the essential spectrum \cite{Persson_1960_essentialsp} (see also \cite[Theorem 14.11]{Hislop_Sigal_book_1996}) implies that $E$ is strictly below the bottom of the essential spectrum (separated by at least $\delta$). We refer the reader to \cite[chapter 3]{Hislop_Sigal_book_1996} or \cite[chapter 8.5]{Pankov_SpectralTheory_book_2001} for a pedagogical treatment of Agmon's method for this basic set-up.

In the 40 years since, Agmon's result has triggered a vast body of literature that develops further the subject of proving decay-estimates. An extensive discussion of both the history and a summary of the works up to the year 2000 can be found in the review article \cite{Hislop_Review_2000}. Of the developments in more recent years, we mention without aiming for a comprehensive list: applications to tight binding type models (see e.g. \cite{Fefferman_Weinstein_CMP_2020, Wang_Zhang_AHP_2020}), decay estimates for magnetic Schr\"odinger operators (see e.g. \cite{Mayboroda_Poggi_2019, Raymond_book_2017}), decay estimates in superconductivity (for a review, see \cite{Fournais_Helffer_book_2010}) and for the Robin problem (see e.g. \cite{Helffer_Kachmar_Raymond_CommContMath_2015, Hellfer_Pankrashkin_JLondMathSoc_2015}), Agmon estimates on quantum graphs (see e.g. \cite{Harrel_Maltsev_2020_TransAMS, Akduman_Pankov_2018, Harrel_Maltsev_2018_CMP}), and the landscape function approach (see e.g. \cite{ADFJM_2019, Steinerberger_ProcAMS_2017, Filoche_Mayboroda_ProcNatlAcadSci_2012}). 

The subject of this short paper is to examine anisotropic decay for the basic Schr\"odinger set-up in $\mathbb{R}^n$, without assuming compactness of the classically allowed region. Specifically, we show that if the characteristic function of the classically allowed region associated with an eigenvalue $E$ is merely integrable with respect to (the square of) an increasing weight function $1 \leq \phi\in C^1([0,+\infty))$ {\em{with bounded logarithmic derivative}}, i.e. if for some $0 < \epsilon < 1$ and $\delta > 0$ one has 
\begin{equation} \label{eq:integrability}
    \int \chi_{\{V\leq E+\delta\}}(x) ~\phi\left((1-\epsilon)\rho_E(x)\right)^2 ~\ud^n x < +\infty ~\mbox{,}
\end{equation}
then the associated eigenfunctions exhibit $L^2$-decay with respect to the same weight. Here, the decay is measured in the Agmon distance $\rho_E(x)$ of $x$ to the origin (see (\ref{eq:agmondef})-(\ref{eq:agmondef_origin}) for the definition), which takes into account anisotropies of the potential. Our main result is formulated in Theorem \ref{the:1}, the precise set-up of which is summarized in hypotheses (H\ref{hyp:1}) and (H\ref{hyp:2}) of section \ref{sec:set-up}. Particularly relevant examples for admissible weight functions $\phi$ in (\ref{eq:integrability}) are power functions $\phi(t) = (1 + t)^r$ with $r >0$, and exponentials $\phi(t) = \mathrm{e}^{\alpha t}$ with $\alpha > 0$. In this context we also point out that our integrability condition (\ref{eq:integrability}) still implies that the eigenvalue $E$ is at a positive distance of at least $\delta$ below the bottom of the essential spectrum, see Proposition \ref{prop:essentialspectrum} in section \ref{sec:L2:sub_conclremarks_ess}.

Morally, the integrability condition in (\ref{eq:integrability}) can be understood as allowing for a classically allowed region which stretches out to $\infty$, however in such a way that its measure decays like $1/\phi$, when measured in the Agmon metric; see also Remark \ref{rem:integrabilitycond_1d} for the situation in one dimension. In particular, for power law weights in (\ref{eq:integrability}), $\phi(t) = (1 + t)^r$ with exponent $r > 0$, our main result in Theorem \ref{the:1} implies that power law decay of ``the size of the classically allowed region'' still allows to conclude power law decay with same exponent of the wave function. Explicit examples for potentials satisfying the condition (\ref{eq:integrability}) are constructed in section \ref{sec:L2:sub_conclremarks_example}. We note that, since for compact classically allowed region, exponential decay of eigenfunctions is known to be optimal \cite{Carmona_Simon_CMP_1981_lowerbounds}, our condition of bounded logarithmic derivative for the weight function $\phi$ in (\ref{eq:integrability}) is in general necessary. Finally, we mention that while our result could be related the more general framework of Agmon's $\lambda$-condition, we believe that a direct proof is valuable in its own right; to our knowledge, this type of question has not been examined explicitly in the literature. 

We structure the paper as follows: In section \ref{sec:set-up}, we describe the precise set-up (see hypotheses (H\ref{hyp:1}) - (H\ref{hyp:2}), and (H\ref{hyp:3}), respectively) for our main results, Theorem \ref{the:1} and its extension, Theorem \ref{the:2}. The proofs of Theorems \ref{the:1} and \ref{the:2} are given in section \ref{sec:L2:sub_proofs}. Section \ref{sec:L2:sub_conclremarks} provides some additional context for our main results: In section \ref{sec:L2:sub_conclremarks_ess}, we show that for eigenvalues $E$ for which the integrability condition in (\ref{eq:integrability}) is satisfied are still strictly below the bottom of the essential spectrum (see Proposition \ref{prop:essentialspectrum}). In section \ref{sec:L2:sub_conclremarks_example}, we construct explicit examples of potentials for which the integrability condition in (\ref{eq:integrability}) holds. Finally, in section \ref{sec:ptw}, we prove point-wise decay of eigenfunctions for more regular potentials (Theorem \ref{thm:4}); this extends the well known result for an exponential weight going back to Agmon \cite{Agmon_lectures1982} to the more general weight functions with bounded logarithmic derivative, considered in this article.

\vspace{.2in}
\noindent
\textbf{Acknowledgements:} The authors would like to thank Peter D. Hislop for numerous valuable discussions while preparing this manuscript.

\section{Set-up and main result} \label{sec:set-up}

Let $H = -\Delta+V$ be a closed operator acting on $L^2(\mathbb{R}^n)$ with $\sigma(H)\subseteq \mathbb{R}$, where $V$ is real-valued, continuous, and bounded below. While certain aspects of this paper may hold true for more general situations, for simplicity and concreteness, we consider the following set-up:
\begin{hyp}[H\ref{hyp:1}] \label{hyp:1}
\begin{itemize}
\item[(a)] Let $V$ be real-valued continuous and bounded below,
\begin{equation} \label{eq:Vlowerbound}
-\infty < m_V := \inf_{x\in\mathbb{R}^n}{V(x)} ~\mbox{,}
\end{equation}
and suppose that, as a multiplication operator with natural self-adjoint domain $D(V):=\{f\in L^2(\mathbb{R}^n): (V f)\in L^2\}$, $V$ is relatively $(-\Delta)$-bounded with relative bound strictly less than one; here, as usual, we define $(-\Delta)$ as a self-adjoint operator on $H^2(\mathbb{R}^n$). In this case, $H=-\Delta + V$ is a self-adjoint operator on the domain $H^2(\mathbb{R}^n) \subseteq D(V)$. 
\item[(b)] Let $\psi$ be an eigenfunction of $H$ with associated eigenvalue $E\in\mathbb{R}$ which also satisfies that $\psi \in L^{\infty}(\mathbb{R}^n)$. 
\end{itemize}
\end{hyp}
\begin{remark} \label{rem:linftyassumption}
In view of hypothesis (H\ref{hyp:1}) part (b), we note that for $n \leq 3$, the Sobolev Embedding Theorem embeds $H^2(\mathbb{R}^n)$ into the continuous functions vanishing at infinity, so that the condition $\psi\in L^\infty(\mathbb{R}^n)$ holds trivially for any eigenfunction $\psi$ of $H$. Moreover, in any dimension $n \in \mathbb{N}$, an eigenfunction $\psi$ of $H$ also satisfies $\psi\in L^\infty(\mathbb{R}^n)$ if the potential $V$ is sufficiently regular (with regularity depending on $n$); indeed, $V \in \mathcal{C}^{k}(\mathbb{R}^n)$ with bounded derivatives, implies that any eigenvector $\psi$ of $H$ automatically satisfies $\psi \in H^{k + 2}(\mathbb{R}^n)$ and thus is continuous and vanishing at infinity if $k + 2 > \frac{n}{2}$, see e.g. \cite[Proposition 1.2]{Hislop_Review_2000}. The latter will also play a role in the point-wise bounds discussed in section \ref{sec:ptw}.
\end{remark}

We recall that for a continuous potential $V$ and $E \in \mathbb{R}$, the {\em{Agmon metric}} is defined by
\begin{equation} \label{eq:agmondef}
    \rho_E(x,y) := \inf_{\gamma \in P_{x,y}}\int_0^1\left(V(\gamma(t))-E\right)_+^{1/2} \vert \dot{\gamma}(t)\vert ~\mathrm{d} t~,
\end{equation}
where $P_{x,y}:=\{\gamma:[0,1]\to \mathbb{R}^n|\gamma(0)=x,~\gamma(1)=y,~\mbox{and}~ \gamma \in AC[0,1]\}$, and $(V-E)_+ := \max\{V-E,0\}$. In particular, the distance in the Agmon metric is zero precisely if two points can be connected by a path which remains completely inside the classically allowed region $\{V \leq E\}$. We note that in one dimension, the definition in (\ref{eq:agmondef}) reduces to the WKB factor: 
\begin{equation} \label{eq:agmon_1d}
    \rho_E(x,y) = \mathrm{sgn}(y-x) \int_x^y\left(V(t)-E\right)_+^{1/2} ~\mathrm{d} t ~\mbox{, for $x, y \in \mathbb{R}^1$ .}
\end{equation}
We will write
\begin{equation} \label{eq:agmondef_origin}
\rho_E(x):= \rho_E(x,0) ~\mbox{,}
\end{equation}
for the Agmon distance of $x \in \mathbb{R}^n$ to the origin.

Finally, we recall the key property of the Agmon metric which provides a relation to the classically allowed region: $x \mapsto \rho_E(x)$ is locally Lipschitz on $\mathbb{R}^n$, in particular, for a.e. in $x \in \mathbb{R}^n$, $\rho_E(x)$ is differentiable with
\begin{equation}\label{eq:agmon}
    \left\vert\nabla \rho_E(x)\right\vert^2\leq (V(x)-E)_+~,
\end{equation}
see, e.g., Proposition 3.3 in \cite{Hislop_Sigal_book_1996}. 

\begin{hyp}[H\ref{hyp:2}] \label{hyp:2}
Let $E \in \mathbb{R}$ be an eigenvalue satisfying hypothesis (H\ref{hyp:1}). For some function $1 \leq \phi\in C^1([0,+\infty))$ with properties
\begin{equation}\label{eq:phi_1}
    \lim_{t \to +\infty}{\phi(t)}=\infty ~\mbox{ and} 
\end{equation}
\begin{equation}\label{eq:mphi}
    0<\sup \left\vert\frac{\phi'}{\phi}\right\vert=:M_\phi<\infty~,
\end{equation}
suppose there exist $\epsilon , \delta > 0$ with 
\begin{equation}\label{eq:epsilon}
    \max\left\{0,1-M_\phi^{-2}\right\}<\epsilon<1~~ 
\end{equation}
such that
\begin{equation}\label{eq:l_2}
    \chi_{\{V\leq E+\delta\}}\phi\left((1-\epsilon)\rho_E\right)\in L^2(\mathbb{R}^n)~.
\end{equation}
Here, $\rho_E(x)$ is the distance to the origin in the Agmon metric, associated with the eigenvalue $E$, defined in (\ref{eq:agmon_1d})-(\ref{eq:agmondef_origin}).
\end{hyp}

We will refer to a function $1 \leq \phi\in C^1([0,+\infty))$ which satisfies the conditions (\ref{eq:phi_1})-(\ref{eq:mphi}) as an {\em{admissible weight function}}. Note that condition (\ref{eq:mphi}) is equivalent to assuming that $\phi$ has {\em{bounded logarithmic derivative}}. In view of our main result, particularly relevant examples for admissible weight functions are power functions $\phi(t) = (1 + t)^r$ with $r >0$, and exponentials $\phi(t) = \mathrm{e}^{\alpha t}$ with $\alpha > 0$. Since for compact classically allowed region, exponential decay of eigenfunctions is known to be optimal \cite{Carmona_Simon_CMP_1981_lowerbounds}, our condition of bounded logarithmic derivative for the weight function $\phi$ in (\ref{eq:mphi}) is in general necessary. Condition (\ref{eq:l_2}) in hypothesis (H\ref{hyp:2}) thus requires that the ``$\delta$-enlarged'' classically allowed region $\{ V \leq E + \delta \}$ be $\phi$-integrable in the Agmon metric.

\begin{remark} \label{rem:integrabilitycond_1d}
To further interpret the meaning of the integrability condition in (\ref{eq:l_2}), observe that in one dimension ($n=1$), continuity of $V$ implies that the sub-level set $\{V \leq E+\delta\}$ is a countable union of closed intervals, i.e.
\begin{equation} \label{eq:interpretintegrabilitycond_1d_classicalregion}
\{V \leq E+\delta\} = \bigcup_{j \in \mathbb{Z}} [a_j , b_j] ~\mbox{, }
\end{equation}
with $[a_j, b_j]$ mutually disjoint except possibly for endpoints. Here, without loss of generality, we take $[a_j , b_j] \subseteq [0, + \infty)$ for $j \geq 0$ and $[a_j , b_j] \subseteq (- \infty, 0]$ for $j <0$.

Observe that the simple form of the Agmon metric in one dimension in (\ref{eq:agmon_1d}) implies that $\rho_E(x)$ increases on $[0,+ \infty)$ and decreases on $(-\infty, 0]$ (not necessarily strictly). Thus, since $\phi$ is increasing, the integrability condition (\ref{eq:l_2}) requires that the length of each of the intervals in the decomposition (\ref{eq:interpretintegrabilitycond_1d_classicalregion}) be summable with respect to $\phi((1-\epsilon) \rho_E)^2$, i.e. 
\begin{align} \label{eq:interpretintegrabilitycond_1d_summability}
\sum_{j \geq 0} \phi((1- \epsilon) \rho_E(a_j))^2 & ~\vert b_j - a_j \vert \leq \Vert \chi_{\{V\leq E+\delta\} \cap [0, +\infty) } ~\phi\left((1-\epsilon)\rho_E\right) \Vert_2^2 \nonumber \\
& \leq \sum_{j \geq 0} \phi((1- \epsilon) \rho_E(b_j))^2 ~\vert b_j - a_j \vert ~\mbox{,}
\end{align} 
and similarly for $\Vert \chi_{\{V\leq E+\delta\} \cap (-\infty,0]) } ~\phi\left((1-\epsilon)\rho_E\right) \Vert_2^2$, where $j < 0$ and the roles of $a_j$ and $b_j$ in (\ref{eq:interpretintegrabilitycond_1d_summability}) are interchanged. 
\end{remark}

Our result shows that $\phi$-integrability of the classically allowed region in the sense of (\ref{eq:l_2}) in hypothesis (H\ref{hyp:2}) implies $L^2$-decay of the eigenfunction with respect to the same weight, specifically:
\begin{theorem} \label{the:1}
Let $\psi \in L^2(\mathbb{R}^n)$ be an eigenfunction of $H$ satisfying hypothesis (H\ref{hyp:1}). Suppose that for an associated eigenvalue $E$ and a weight function $\phi$ as in hypothesis (H\ref{hyp:2}), the integrability condition (\ref{eq:l_2}) holds for some $0< \epsilon < 1$ as in (\ref{eq:epsilon}) and $\delta > 0$. Then, there exists a constant $0<c_{\epsilon, \delta} <\infty$, such that
\begin{equation*}
    \int \phi((1-\epsilon)\rho_E(x))^2|\psi(x)|^2 ~\ud^n x\leq c_{\epsilon, \delta} ~.
\end{equation*}
\end{theorem}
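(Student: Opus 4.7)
The plan is to implement Agmon's multiplier argument with the weight $F(x) := \phi\bigl((1-\epsilon)\rho_E(x)\bigr)$, for which the conclusion $\int F^2|\psi|^2 \ud^n x < \infty$ is precisely what we want. Since $\rho_E$ is locally Lipschitz with $|\nabla\rho_E|^2 \leq (V-E)_+$ a.e.\ by (\ref{eq:agmon}), and $\phi\in C^1$ satisfies $|\phi'|\leq M_\phi \phi$ by (\ref{eq:mphi}), the chain rule yields the key pointwise gradient bound
\begin{equation*}
    |\nabla F(x)|^2 \leq (1-\epsilon)^2 M_\phi^2\, F(x)^2\, (V(x)-E)_+ \quad \text{a.e.\ } x \in \mathbb{R}^n.
\end{equation*}
Crucially, the lower bound on $\epsilon$ in (\ref{eq:epsilon}) is calibrated so that $\kappa := 1 - (1-\epsilon)^2 M_\phi^2 > 0$, which is the decisive algebraic slack in what follows.

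The analytical backbone is the standard Agmon identity: for any bounded Lipschitz real-valued $f$ with $f\psi \in H^1(\mathbb{R}^n)$, testing $H\psi = E\psi$ against $f^2\psi$ and applying the integration-by-parts identity $\langle f^2(-\Delta)\psi,\psi\rangle = \|\nabla(f\psi)\|_2^2 - \|\psi\nabla f\|_2^2$ gives
\begin{equation*}
    \|\nabla(f\psi)\|_2^2 + \int f^2(V-E)|\psi|^2 \ud^n x = \int |\nabla f|^2 |\psi|^2 \ud^n x.
\end{equation*}
Because $F$ is in general unbounded, I will apply this with $f = F_R := \phi_R\bigl((1-\epsilon)\rho_E\bigr)$, where $\phi_R(t) := \min\{\phi(t),\phi(R)\}$. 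This $\phi_R$ is bounded and Lipschitz, increases monotonically to $\phi$ as $R\to\infty$, and still obeys $|\phi_R'|\leq M_\phi \phi_R$ a.e., so $F_R$ satisfies the gradient bound above with the same constants. The membership $F_R\psi \in H^1$ follows from $\psi\in H^2\cap L^\infty$ together with $|V-E|^{1/2}\psi \in L^2$; the latter is obtained from Cauchy--Schwarz via $\int |V-E||\psi|^2 \leq \|(V-E)\psi\|_2\|\psi\|_2 < \infty$, using $\psi\in D(H) = H^2 \subseteq D(V)$.

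Substituting $f = F_R$ into the identity, applying the gradient bound on the right, and splitting $V-E = (V-E)_+ - (V-E)_-$ yields
\begin{equation*}
    \kappa \int F_R^2 (V-E)_+|\psi|^2 \ud^n x \leq \int F_R^2 (V-E)_-|\psi|^2 \ud^n x.
\end{equation*}
The right-hand side is supported on $\{V\leq E\}\subseteq \{V\leq E+\delta\}$ and bounded by $(E-m_V)\|\psi\|_\infty^2 \|\chi_{\{V\leq E+\delta\}}F\|_2^2$, which is finite by (\ref{eq:l_2}). To recover $\int F_R^2|\psi|^2 \ud^n x$, I split into the allowed and strongly forbidden regions: on $\{V\leq E+\delta\}$ one has $\int F_R^2|\psi|^2 \leq \|\psi\|_\infty^2 \|\chi_{\{V\leq E+\delta\}} F\|_2^2$ directly; on $\{V > E+\delta\}$ the bound $(V-E)_+ > \delta$ gives
\begin{equation*}
    \int_{\{V > E+\delta\}} F_R^2|\psi|^2 \ud^n x \leq \frac{1}{\delta} \int F_R^2 (V-E)_+|\psi|^2 \ud^n x,
\end{equation*}
which was just controlled. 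The resulting bound is independent of $R$, and monotone convergence as $R\to\infty$ yields the claimed finite $c_{\epsilon,\delta}$.

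The main obstacle I anticipate is the regularization step. The truncation $\phi_R$ must simultaneously (i) preserve the pointwise gradient bound with the same constant $M_\phi$, since $\kappa$ depends sensitively on it; (ii) ensure $F_R\psi \in H^1$ so that the Agmon identity is rigorously justified and not merely formal; and (iii) give an estimate that is uniform in $R$, so that the limit $R\to\infty$ is available. The choice $\phi_R = \min\{\phi,\phi(R)\}$ is tailored to all three, but carefully tracking the corner of $\phi_R$ at $t=R$ and verifying the Sobolev membership are the places where one must be precise; once these are settled, the allowed-versus-forbidden bookkeeping in the final paragraph is routine.
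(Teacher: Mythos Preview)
Your proposal is correct and takes essentially the same approach as the paper: the paper phrases the Agmon identity as a gauge transform $H_{f_\alpha}=\phi(f_\alpha)H\phi(f_\alpha)^{-1}$ with the regularization $f_\alpha=(1-\epsilon)\rho_E/(1+\alpha(1-\epsilon)\rho_E)$ (taking $\alpha\to0^+$) in place of your truncation $\phi_R=\min\{\phi,\phi(R)\}$, but the substance---drop the nonnegative kinetic term, absorb the forbidden-region contribution via the gradient bound with slack $1-M_\phi^2(1-\epsilon)>0$, and control the allowed region by $\|\psi\|_\infty^2\,\|\chi_{\{V\leq E+\delta\}}\phi((1-\epsilon)\rho_E)\|_2^2$---is identical. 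Your gradient estimate carries the sharper factor $(1-\epsilon)^2$ where the paper uses $(1-\epsilon)$, so your $\kappa$ exceeds the paper's $\eta_\epsilon$, but under hypothesis~(\ref{eq:epsilon}) both are positive and the argument goes through unchanged.
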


We note that the positive lower bound on the value of $\epsilon$ in (\ref{eq:epsilon}) is a result of isolating the contribution of the ``$\delta$-enlarged classically allowed region,'' $\{V < E + \delta\}$; see also (\ref{eq:inner_p}) in the proof below. It drops to zero if $M_\phi\leq 1$, i.e. if $\phi' \leq \phi$ for all $t \in [0, + \infty)$. By Gr\"onwall's inequality, the latter is equivalent to $\phi(x) \leq \mathrm{e}^t $, for all $t\in[0,\infty)$. For the power law weights 
\begin{equation} \label{eq:polyweights_cond}
\phi(t) = (1 + t)^r ~\mbox{ with $r > 1$ ,}
\end{equation}
the condition in (\ref{eq:epsilon}) however implies the positive lower bound
\begin{equation}
\epsilon > 1 - \frac{1}{r^2} ~\mbox{.}
\end{equation}

This positive lower bound on $\epsilon$ can however be removed by replacing hypothesis (H\ref{hyp:2}) with the following:
\begin{hyp}[H\ref{hyp:3}]\label{hyp:3}
\begin{itemize}
\item[(a)] Let $1 \leq \phi\in C^1([0,+\infty))$ satisfy (\ref{eq:phi_1}) and (\ref{eq:mphi}), as well as that
\begin{equation}\label{eq:p'p}
    \lim_{x\to\infty}\frac{\phi'(x)}{\phi(x)} = 0~.
\end{equation}
\item[(b)] For $V$ and $E \in \mathbb{R}$ as in (H\ref{hyp:1}), assume that
\begin{equation}\label{eq:rho2inf}
    \lim_{x\to\infty}\rho_E(x) = \infty~.
\end{equation}
Suppose that there exist $0<\epsilon<1$ and $\delta>0$ such that (\ref{eq:l_2}) holds.
\end{itemize}
\end{hyp}
An important example of weights satisfying (\ref{eq:p'p}) are the power law weights in (\ref{eq:polyweights_cond}). We mention that in one dimension, the condition (\ref{eq:rho2inf}) is always satisfied; see also Remark \ref{remark:agmon1d_infty}. Replacing hypothesis (H\ref{hyp:2}) by (H\ref{hyp:3}), we then obtain an improved result which removes the positive lower threshold on $\epsilon$ in (\ref{eq:epsilon}):
\begin{theorem}\label{the:2}
Suppose $\psi \in L^2(\mathbb{R}^n)$ is an eigenfunction of $H$ in the sense of hypothesis (H\ref{hyp:1}) and that for an associated eigenvalue $E$ the condition (\ref{eq:rho2inf}) in hypothesis (H\ref{hyp:3}) holds. Given a weight function $\phi$, satisfying the hypothesis (H\ref{hyp:3}) item (a), one has: for each $0<\epsilon<1$ and $\delta > 0$ for which the integrability condition (\ref{eq:l_2}) holds, there exists a constant $0<\tilde{c}_{\epsilon, \delta} <\infty$ such that
\begin{equation*}
    \int \phi((1-\epsilon)\rho_E(x))^2|\psi(x)|^2 ~\ud^n x\leq \tilde{c}_{\epsilon,\delta} ~.
\end{equation*}
\end{theorem}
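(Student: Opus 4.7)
The plan is to revisit the Agmon-style integration-by-parts underlying Theorem~\ref{the:1}, but to exploit the additional hypothesis~(\ref{eq:p'p}) that $\phi'/\phi \to 0$ at infinity in order to replace the global bound $M_\phi$ by an arbitrarily small parameter $\eta$ on the complement of a compact set. First I would set $f(x):=\phi((1-\epsilon)\rho_E(x))$; after a standard cut-off of $f$ at height $N$ followed by a monotone/dominated convergence passage $N\to\infty$, testing the eigenvalue equation $(H-E)\psi=0$ against $f^2\psi$ and integrating by parts yields the Agmon identity
\begin{equation*}
\int|\nabla(f\psi)|^2\,\ud^n x + \int (V-E)\,f^2|\psi|^2\,\ud^n x = \int|\nabla f|^2\,|\psi|^2\,\ud^n x.
\end{equation*}
The chain rule combined with the gradient bound~(\ref{eq:agmon}) then gives the pointwise estimate $|\nabla f(x)|^2 \leq (1-\epsilon)^2 \bigl((\phi'/\phi)((1-\epsilon)\rho_E(x))\bigr)^2\, f(x)^2\, (V(x)-E)_+$.

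The crucial new step, which removes the positive lower threshold on $\epsilon$ from hypothesis~(H\ref{hyp:2}), is the following localization. Fix, for concreteness, $\eta=1/2$; by~(\ref{eq:p'p}) there exists $T_\eta>0$ with $|\phi'(t)/\phi(t)|<\eta$ for all $t>T_\eta$, and I set $R_\eta:=T_\eta/(1-\epsilon)$. Hypothesis~(\ref{eq:rho2inf}) combined with continuity of $\rho_E$ ensures that $B_{R_\eta}:=\{x\in\mathbb{R}^n~:~\rho_E(x)\leq R_\eta\}$ is compact, so $f$ and $V$ are bounded on $B_{R_\eta}$ and the integrals of $f^2|\psi|^2$ and $|\nabla f|^2|\psi|^2$ over $B_{R_\eta}$ are finite thanks to $\psi\in L^2(\mathbb{R}^n)$. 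On the complement $B_{R_\eta}^c$, the pointwise gradient estimate sharpens to $|\nabla f|^2\leq (1-\epsilon)^2\eta^2 f^2(V-E)_+$.

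The proof is then completed by reorganizing the Agmon identity according to $B_{R_\eta}$ vs.\ $B_{R_\eta}^c$ and according to the $\delta$-enlarged classically allowed region $A_-:=\{V\leq E+\delta\}$ vs.\ its complement $A_+:=\{V>E+\delta\}$. On $A_-$, one has $(V-E)_+\leq\delta$, and the integrability condition~(\ref{eq:l_2}) together with $\psi\in L^\infty(\mathbb{R}^n)$ bounds $\int_{A_-}f^2|\psi|^2$ directly. On $B_{R_\eta}^c\cap A_+$, where $V-E>\delta$, inserting the improved gradient bound into the Agmon identity (and dropping the nonnegative $\int|\nabla(f\psi)|^2$) produces an inequality of the form
\begin{equation*}
\bigl(1-(1-\epsilon)^2\eta^2\bigr)\int_{B_{R_\eta}^c\cap A_+}(V-E)\,f^2|\psi|^2\,\ud^n x \leq C,
\end{equation*}
where $C$ collects the finite contributions from $B_{R_\eta}$ and $A_-$. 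For the choice $\eta=1/2$ and any $0<\epsilon<1$, the prefactor $1-(1-\epsilon)^2\eta^2 \geq 3/4 > 0$; dividing by $\delta$ converts this into a bound on $\int_{B_{R_\eta}^c\cap A_+}f^2|\psi|^2$, and summing the three regional estimates gives the claimed bound with constant $\tilde{c}_{\epsilon,\delta}$.

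I expect the main obstacle to be the localization step: the compactness of $B_{R_\eta}$ is exactly where hypothesis~(\ref{eq:rho2inf}) becomes indispensable, since without it the region where $\phi'/\phi$ is not yet small could fail to be of finite measure and neither $f$ nor $V$ would be controlled there. The remaining work is routine, namely justifying the Agmon identity for the possibly unbounded weight $f$ via a cut-off, and the bookkeeping of the four-way regional split, which only refines the analysis already present in the proof of Theorem~\ref{the:1}.
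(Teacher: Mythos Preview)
Your proposal is correct and reaches the same conclusion, but the route is genuinely different from the paper's. The paper keeps the gauge-transform framework of Theorem~\ref{the:1} intact and introduces a smooth \emph{Euclidean} cutoff $\chi_R$, setting $\Phi_{\alpha,R}=\chi_R\phi(f_\alpha)\psi$; the price is that $\Re\langle\Phi_{\alpha,R},(H_{f_\alpha}-E)\Phi_{\alpha,R}\rangle$ no longer vanishes, and a separate commutator computation (Lemma~\ref{lm:2}) is needed to show that this term is controlled by quantities supported on $\mathrm{supp}\,\nabla\chi_R$. You instead work with the Agmon identity directly and localize by \emph{splitting the integral} over the Agmon-metric sublevel set $B_{R_\eta}=\{\rho_E\le R_\eta\}$, never introducing a cutoff into the test function. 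This is more elementary: no analogue of Lemma~\ref{lm:2} is required, and the localization in $\rho_E$ is more naturally adapted to the hypothesis~(\ref{eq:p'p}), since the smallness of $\phi'/\phi$ is a condition on the argument $(1-\epsilon)\rho_E(x)$ rather than on $|x|$. Both approaches rely on~(\ref{eq:rho2inf}) at the same point, namely to ensure that the ``bad'' region where $\phi'/\phi$ is not yet small is compact. The paper's approach has the advantage of reusing the $H^1$-regularity and bounds already established for $\Phi_\alpha$ in Lemma~\ref{lm:1}; your height-truncation $f_N=\min(f,N)$ requires you to redo that step, but as you note this is routine once one uses $\psi\in D(V)$ to control $\int(V-E)_+|\psi|^2$.
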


\section{$L^2$ decay} \label{sec:L^2}

\subsection{Proof of the main results} \label{sec:L2:sub_proofs}

Following Agmon, the key to proving Theorem \ref{the:1} is to perform a gauge transform, which we will adapt to the weight $\phi$. Specifically, given $\epsilon > 0$ as in (\ref{eq:epsilon}), for $\alpha > 0$, we consider the operator
 \begin{equation} \label{eq:gauge_op}
  H_{f_\alpha}:= \phi(f_\alpha)H\phi(f_\alpha)^{-1} \mbox{, } 
\end{equation}
where
\begin{equation}
f_\alpha:=\frac{(1-\epsilon)\rho_E}{1+\alpha (1-\epsilon)\rho_E} ~\mbox{;}
\end{equation}
To avoid confusion, we mention that $\phi(f_\alpha)^{-1}$ denotes $\frac{1}{\phi(f_\alpha)}$ and {\em{not}} the inverse function of $\phi(f_\alpha)$. Since we will eventually take the limit $\alpha \to 0^+$, we also note that
\begin{equation}
f_\alpha(x) \nearrow (1 - \epsilon) \rho_E(x) =: f_0(x) ~\mbox{, }
\end{equation}
{\em{monotonically}} for each $x \in \mathbb{R}^n$.

Finally, given an eigenfunction $\psi$ as in hypothesis (H\ref{hyp:1}), we associate with the transformed operator (\ref{eq:gauge_op}) the transformed wave function:
\begin{equation} \label{eq:gauge_wavefcn}
\Phi_\alpha:=\phi(f_\alpha)\psi ~\mbox{.}
\end{equation}

We then prove the following key lemma:
\begin{lemma}\label{lm:1}
Given $\max\left\{0,1-M_\phi^{-2}\right\}<\epsilon<1$ and $\delta>0$ as in hypothesis (H\ref{hyp:2}). For each $\alpha>0$, one then has that $\Phi_\alpha\in H^1(\mathbb{R}^n)\cap D(V)$ and 
\begin{equation}\label{eq:lm2}
    ||\Phi_\alpha||^2_2 \leq \frac{1}{\eta_\epsilon\delta}\left\{\Re\langle \Phi_\alpha, (H_{f_\alpha}-E)\Phi_\alpha\rangle+\int|\Phi_\alpha|^2(V-E)_-\right\}+\int_{\{V\leq E+\delta\}}|\Phi_\alpha|^2~,
\end{equation}
where 
\begin{equation}
0 < \eta_\epsilon := 1 - M_\phi^2(1-\epsilon) <1 ~\mbox{.}
\end{equation}
\end{lemma}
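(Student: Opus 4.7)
My plan is to follow the standard Agmon scheme adapted to the $\phi$-gauge, in four steps: (i) establish the regularity $\Phi_\alpha \in H^1 \cap D(V)$ so the quadratic forms are well-defined; (ii) derive the quadratic-form identity for the conjugated operator $H_{f_\alpha} = \phi(f_\alpha) H \phi(f_\alpha)^{-1}$; (iii) insert the pointwise Agmon bound (\ref{eq:agmon}) on $|\nabla f_\alpha|$ to isolate $\int (V-E)_+ |\Phi_\alpha|^2$; and (iv) conclude via a Chebyshev-type split on $\{V > E+\delta\}$.

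For the regularity, the key observation is that the Yosida-type regularization built into $f_\alpha$ forces $f_\alpha \leq 1/\alpha$ globally, so by continuity $1 \leq g := \phi(f_\alpha) \leq \phi(1/\alpha) =: K_\alpha$, with $1/g \leq 1$. Combined with $\psi \in H^2 \subseteq D(V)$ from (H\ref{hyp:1}), this gives immediately $\Phi_\alpha, V\Phi_\alpha \in L^2$. For $\nabla \Phi_\alpha = \phi'(f_\alpha)(\nabla f_\alpha)\psi + \phi(f_\alpha)\nabla\psi$ I control the first term via $|\phi'(f_\alpha)| \leq M_\phi K_\alpha$, direct differentiation of $f_\alpha$ together with (\ref{eq:agmon}) yielding $|\nabla f_\alpha|^2 \leq (1-\epsilon)^2 (V-E)_+$, and $(V-m_V)^{1/2}\psi \in L^2$ (since $\psi$ sits in the form domain of $H$).

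With this in hand, I derive the quadratic-form identity
\begin{equation*}
\Re\langle u, (H_g - E) u\rangle = \int |\nabla u|^2 \, \ud^n x \,-\, \int |u|^2 \frac{|\nabla g|^2}{g^2} \, \ud^n x \,+\, \int (V-E)|u|^2 \, \ud^n x
\end{equation*}
for $u \in H^1 \cap D(V)$, by expanding $H_g u = -\Delta u + 2 g^{-1}(\nabla g)\cdot\nabla u + g^{-1}(\Delta g)\,u - 2 g^{-2}|\nabla g|^2 u + V u$ and integrating the first-order cross term by parts through $\Re(\bar u\,\nabla u) = \tfrac{1}{2}\nabla |u|^2$; the two $\Delta g$ contributions cancel exactly, leaving only the $-|\nabla g|^2/g^2$ defect. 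Specializing to $u = \Phi_\alpha$, the chain rule plus $|\phi'/\phi| \leq M_\phi$ gives the pointwise estimate $|\nabla g|^2/g^2 \leq M_\phi^2 (1-\epsilon)(V-E)_+$ (using $(1-\epsilon)^2 \leq (1-\epsilon)$), so discarding the nonnegative $\int |\nabla \Phi_\alpha|^2$ and splitting $V-E = (V-E)_+ - (V-E)_-$ yields
\begin{equation*}
\Re\langle \Phi_\alpha, (H_{f_\alpha}-E)\Phi_\alpha\rangle + \int (V-E)_- |\Phi_\alpha|^2 \, \ud^n x \geq \eta_\epsilon \int (V-E)_+ |\Phi_\alpha|^2 \, \ud^n x ,
\end{equation*}
with $\eta_\epsilon > 0$ precisely by the lower threshold (\ref{eq:epsilon}).

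Finally, since $(V-E)_+ > \delta$ on $\{V > E+\delta\}$, the Chebyshev-type bound $\int (V-E)_+ |\Phi_\alpha|^2 \geq \delta\bigl(\|\Phi_\alpha\|_2^2 - \int_{\{V\leq E+\delta\}}|\Phi_\alpha|^2\bigr)$ combined with the previous inequality and division by $\eta_\epsilon \delta$ produces (\ref{eq:lm2}). The step I expect to need the most care is the first one: every manipulation in (ii) must correspond to a bona fide $L^2$-pairing rather than a formal calculation, and this is exactly what the regularization $(1+\alpha(1-\epsilon)\rho_E)^{-1}$ inside $f_\alpha$ is designed to guarantee, keeping $g$, $1/g$, and $|\nabla g|/g$ uniformly controlled for each fixed $\alpha > 0$ so that the passage $\alpha \to 0^+$ can be postponed to the proof of Theorem \ref{the:1}.
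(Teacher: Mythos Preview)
Your proposal is correct and follows essentially the same approach as the paper: the regularity argument for $\Phi_\alpha \in H^1 \cap D(V)$, the gauge identity yielding the $-|\nabla g|^2/g^2$ defect (the paper packages this as the antisymmetry of the anticommutator $[\nabla,\omega_\alpha]_+$, which is exactly your integration-by-parts cancellation), the pointwise bound $|\nabla g|^2/g^2 \le M_\phi^2(1-\epsilon)(V-E)_+$, and the final Chebyshev split on $\{V>E+\delta\}$ all match. The only point to handle with a bit more care in writing is that your intermediate expansion displays $\Delta g$, which is not a priori a function since $\rho_E$ is merely Lipschitz; the paper's anticommutator formulation keeps everything first-order in $\nabla f_\alpha$ from the outset, but your cancellation of the $\Delta g$ terms achieves the same thing and is easily made rigorous in the form sense.
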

Here, as common, we denote $(V-E)_- :=\vert V - E \vert - (V-E)_+$.

\begin{proof}
Fix $\alpha > 0$. First, we verify that $\Phi_\alpha \in H^1(\mathbb{R}^n)$. Observe that $\Phi_\alpha \in L^2$ since $\psi\in L^2$ and $\phi(f_\alpha)$ is bounded by
\begin{equation}\label{eq:f_abdd}
    0\leq f_{\alpha}\leq \min{\left\{f_0,\frac{1}{\alpha}\right\}} ~.
\end{equation}
We also need to show that $\nabla\Phi_\alpha\in L^2$, where
\begin{equation}\label{eq:dphi}
    \nabla\Phi_\alpha =\phi'(f_\alpha) ~\psi ~\nabla f_\alpha + \phi(f_\alpha) ~\nabla\psi  ~\mbox{.}
\end{equation}

Using (\ref{eq:agmon}), for a.e. $x \in \mathbb{R}^n$, $\vert \nabla f_\alpha \vert^2$ is estimated by
\begin{equation}\label{eq:df_a}
    \left\vert\nabla f_\alpha\right\vert^2 = \left\vert\frac{\nabla f_0}{(1+\alpha f_0)^2}\right\vert^2 \leq |\nabla f_0 |^2 \leq (1-\epsilon)(V-E)_+ ~\mbox{. }
\end{equation}
Therefore, the bound in (\ref{eq:f_abdd}) and the hypothesis in (H\ref{hyp:1}) that $\psi \in D(V)$ yield
    \begin{align}
        \int |\phi' &(f_\alpha) \psi |^2 ~|\nabla f_\alpha|^2 \leq (1-\epsilon)\sup_{0 \leq  t \leq 1/\alpha}|\phi'(t)|^2\int|\psi|^2(V-E)_+ \nonumber \\
        & =(1-\epsilon)\sup_{0 \leq t \leq 1/\alpha}|\phi'(t)|^2\left\{\int_{\{ V-E \geq 1 \}}|\psi|^2(V-E)_++\int_{\{0 \leq V-E < 1\}}|\psi|^2(V-E)_+ \right\}  \nonumber \\
        &\leq \left. \sup_{0 \leq t \leq 1/\alpha}|\phi'(t)|^2 ~\left\{||\psi(V-E)||^2_{2}+||\psi||^2_{2}\right\}  \right. <\infty~.
    \end{align}
To see that also the second term in (\ref{eq:dphi}) is in $L^2$, observe that by (\ref{eq:f_abdd}), $\phi(f_\alpha)$ is bounded, and that the hypotheses in (H\ref{hyp:1}) imply that $\psi\in H^2(\mathbb{R}^n) \subseteq D(V)$. In summary, we conclude that $\Phi_\alpha\in H^1(\mathbb{R}^n)$; moreover, $\Phi_\alpha\in D(V)$ because $\psi\in D(V)$ and $\phi(f_\alpha)\in L^\infty$.

Now, we compute the gauge transform:
\begin{equation*}
\begin{aligned}
    H_{f_\alpha}&=-\phi(f_\alpha)\Delta\phi(f_\alpha)^{-1}+ V \\
    &= -\left[\phi(f_\alpha)\nabla\phi(f_\alpha)^{-1}\right]\cdot \left[\phi(f_\alpha)\nabla\phi(f_\alpha)^{-1}\right]+ V~,
\end{aligned}
\end{equation*}
where 
\begin{align*}
    \phi(f_\alpha)\nabla\phi(f_\alpha)^{-1} &= \nabla - \frac{\phi'(f_\alpha)}{\phi(f_\alpha)}\nabla f_\alpha = : \nabla - \omega_\alpha~.
\end{align*}

Whence, we have 
\begin{equation}\label{eq:H_f}
    H_{f_\alpha} = - (\nabla - \omega_\alpha)^2 + V  = - \Delta + [\nabla,\omega_\alpha]_+ - \vert\omega_\alpha\vert^2 +V~\mbox{,}
\end{equation}
where $[\nabla,\omega_\alpha]_+$ is the anti-commutator of $\nabla$ and $\omega_\alpha$. Then, since $\Phi_\alpha\in H^1(\mathbb{R}^n)\cap D(V)$, we estimate, using (\ref{eq:H_f}) and (\ref{eq:df_a}):
\begin{align} \label{eq:inner_p}
    \Re\langle \Phi_\alpha, (H_{f_\alpha}-E)\Phi_\alpha\rangle & \geq \langle \Phi_\alpha, (V-|\omega_\alpha|^2-E)\Phi_\alpha\rangle \nonumber \\
    &\geq \langle \Phi_\alpha, (V- \left\vert\frac{\phi'(f_\alpha)}{\phi(f_\alpha)}\right\vert^2(1-\epsilon)(V-E)_+-E)\Phi_\alpha\rangle \nonumber \\
    &\geq \langle \Phi_\alpha, (V- M_\phi^2(1-\epsilon)(V-E)_+-E)\Phi_\alpha\rangle ~\mbox{.}
\end{align}
Here, we used that $[\nabla,\omega_\alpha]_+$ is antisymmetric, so its real part vanishes. Thus, substituting for $M_\phi^2(1-\epsilon)=1-\eta_\epsilon$, and taking $\delta>0$ as in (H\ref{hyp:2}) so that (\ref{eq:l_2}) holds, we have
\begin{align}
\Re\langle & \Phi_\alpha , (H_{f_\alpha}-E) \Phi_\alpha\rangle  \geq  \langle \Phi_\alpha, (-(V-E)_- +\eta_\epsilon(V-E)_+)\Phi_\alpha\rangle \nonumber \\
& =\eta_\epsilon \left\{\int_{\{V> E+\delta\}}|\Phi_\alpha|^2(V-E)_+ + \int_{\left\{E < V\leq E+\delta\right\}}|\Phi_\alpha|^2(V-E)_+ \right\} \nonumber  \\
 & -\int |\Phi_\alpha|^2(V-E)_- \geq \eta_\epsilon\delta\int_{\{V>E+\delta\}}|\Phi_\alpha|^2-\int |\Phi_\alpha|^2(V-E)_- \nonumber  \\
    & = \eta_\epsilon\delta\left\{||\Phi_\alpha||^2_2-\int_{\{V\leq E+\delta\}}|\Phi_\alpha|^2\right\}-\int |\Phi_\alpha|^2(V-E)_-~. \label{eq:inner}
\end{align}

Finally, to show the finiteness of the last two terms on the right hand side of  (\ref{eq:inner}), we write
\begin{equation}\label{eq:S}
    S_{E,V,\epsilon,\delta}:=||\chi_{\{V\leq E+\delta\}}\phi((1-\epsilon)\rho_E)||_2^2<\infty~,
\end{equation}
whose existence is guaranteed by (\ref{eq:l_2}) in (H\ref{hyp:2}). Then, we estimate, using our hypotheses in (H\ref{hyp:1}) that $V$ is bounded below and that $\psi \in L^\infty$:
    \begin{align}
    \int |\Phi_\alpha|^2(V-E)_- & \leq \int_{\{V\leq E\}}|\Phi_\alpha|^2(E-V) = \int \chi_{\{V\leq E\}}^2|\psi|^2\phi(f_\alpha)^2(E-V) \nonumber \\
     & \leq\int \chi^2_{\{V\leq E+\delta\}}|\psi|^2\phi(f_0)^2(E-m_V) \nonumber \\
    & \leq (E-m_V) ||\psi||^2_\infty S_{E,V,\epsilon,\delta} =: C_1(E,m_V,S_{E,V,\epsilon,\delta}) < \infty~. \label{eq:bd1}
    \end{align}
Similarly, we have
    \begin{align}
    \int_{\{V \leq E+\delta\}} |\Phi_\alpha|^2 & \leq ||\psi||^2_\infty S_{E,V,\epsilon,\delta} =: C_2(E,m_V,S_{E,V,\epsilon,\delta})<\infty~. \label{eq:bd2}
    \end{align}
For later purposes, note that both the bounds in (\ref{eq:bd1}) and (\ref{eq:bd2}) are uniform in $\alpha >0$. In summary, combining (\ref{eq:inner}) with (\ref{eq:bd1}) - (\ref{eq:bd2}), we thus arrive at the claim in (\ref{eq:lm2}).
\end{proof}

Now, we are ready to prove the main result in Theorem \ref{the:1}:
\begin{proof}[Proof of Theorem \ref{the:1}]
First, from Lemma \ref{lm:1} and the bounds in (\ref{eq:bd1}) - (\ref{eq:bd2}), we know that  
    \begin{align} \label{eq:bd3}
        ||\Phi_\alpha||^2_2 &\leq \frac{1}{\eta_\epsilon\delta}\left\{\Re\langle \Phi_\alpha, (H_{f_\alpha}-E)\Phi_\alpha\rangle+\int|\Phi_\alpha|^2(V-E)_-\right\}+\int_{\{V\leq E+\delta\}}|\Phi_\alpha|^2 \nonumber \\
        & \leq\frac{1}{\eta_\epsilon\delta}\Re\langle \Phi_\alpha, (H_{f_\alpha}-E)\Phi_\alpha\rangle +\frac{C_1(E,m_V,S_{E,V,\epsilon,\delta})}{\eta_\epsilon\delta}+C_2(E,m_V,S_{E,V,\epsilon,\delta})~. 
    \end{align}

Observe that
\begin{equation}\label{eq:re_0}
    \langle \Phi_\alpha,(H_{f_\alpha}-E)\Phi_\alpha\rangle = \langle \phi(f_\alpha)^2\psi, (H-E)\psi\rangle = 0~,
\end{equation}
hence, by (\ref{eq:bd3}) and (\ref{eq:re_0}), we have:
\begin{equation}
    ||\Phi_\alpha||_2^2 \leq\frac{C_1(E,m_V,S_{E,V,\epsilon,\delta})}{\eta_\epsilon\delta}+C_2(E,m_V,S_{E,V,\epsilon,\delta}) =:c_{\epsilon, \delta}<\infty~.
\end{equation}
Since $c_{\epsilon,\delta}$ is {\em{independent}} of $\alpha>0$, we can take the limit  $\alpha\to0^+$, and thus arrive at the claim by Fatou's Lemma,
\begin{equation*}
    \int \phi((1-\epsilon)\rho_E(x))^2|\psi(x)|^2 \leq\liminf_{\alpha\to0+}||\Phi_\alpha||^2_2 \leq c_{\epsilon, \delta}<\infty~.
\end{equation*}
\end{proof}

We turn to the proof of Theorem \ref{the:2} which removes the positive lower bound on $\epsilon$ in (\ref{eq:epsilon}) by replacing hypothesis (H\ref{hyp:2}) with (H\ref{hyp:3}). As mentioned earlier in section \ref{sec:set-up}, in dimension one (\ref{eq:rho2inf}) is redundant:
\begin{remark} \label{remark:agmon1d_infty}
Note that (\ref{eq:rho2inf}) is {\em{always satisfied in dimension one}} ($n=1$), in which case the definition of the Agmon metric reduces to the WKB factor in (\ref{eq:agmon_1d}).
Indeed, by the integrability condition (\ref{eq:l_2}), we know that
\begin{equation}
    |\{V\leq E+\delta\}\cap \mathbb{R}^\pm|<||\chi_{\{V\leq E+\delta\}}||_2<||\chi_{\{V\leq E+\delta\}}\phi((1-\epsilon)\rho_E)||_2< \infty~,
\end{equation}
whence
\begin{equation}
    \lim_{x\to\pm\infty}|\{V>E+\delta\}\cap[0,x]| = |\{V>E+\delta\}\cap\mathbb{R}^\pm| = \infty~.
\end{equation}
Hence, using the special form of the Agmon metric in $\mathbb{R}$ given in (\ref{eq:agmon_1d}), we have
\begin{equation}
    \begin{aligned}
    \liminf_{x\to\pm\infty}\rho_E(x) &=\liminf_{x\to\pm\infty}\int_0^x(V-E)_+^{1/2}\ud t \\
    &\geq \liminf_{x\to\pm\infty}\int_{\{V>E+\delta\}\cap[0,x]}(V-E)_+^{1/2}\ud t \\
    &\geq \sqrt{\delta}|\{V>E+\delta\}\cap\mathbb{R}^\pm|=\infty
    \end{aligned}
\end{equation}

Equation (\ref{eq:rho2inf}) may however no longer hold in higher dimensions, $\mathbb{R}^n$ with $n \geq 2$, where the more complex form of the Agmon metric in (\ref{eq:agmondef}) may lead to the situation
\begin{equation}
    \liminf_{x\to\infty}\rho_E(x)<\limsup_{x\to\infty}\rho_E(x)~.
\end{equation}
\end{remark}

The additional hypotheses formulated in (H\ref{hyp:3}) will allow us to remove the positive lower bound on $\epsilon$ in (\ref{eq:epsilon}) by modifying $\Phi_\alpha$ in Lemma \ref{lm:1} with a smooth cut-off $\chi_R$: Specifically, consider
\begin{equation}\label{eq:phi_ar}
    \Phi_{\alpha,R}:=\chi_R\Phi_\alpha~ = \chi_R\phi(f_\alpha)\psi~,
\end{equation}
where $\chi_R\in C^\infty(\mathbb{R}^n)$ satisfies $0\leq\chi_R\leq 1$, $\chi_R|_{B_R(0)}=0$, $\chi_R|_{\mathbb{R}^n \setminus \overline{B_{R+1}(0)}}=1$, and that $\nabla \chi_R$ is compactly supported with $||\nabla \chi_R||_\infty\leq 1$. Here, taking advantage of (\ref{eq:p'p}), we choose $R>0$ in (\ref{eq:phi_ar}) so that
\begin{equation} \label{eq:defnR}
    \sup_{x>R}|\phi'(x)/\phi(x)|\leq1~~ \mbox{.}
\end{equation}

Observe that $\Phi_{\alpha,R}$ is still in $H^1(\mathbb{R}^n) \cap D(V)$, as both $\chi_R$ and $\nabla\chi_R$ are bounded, and that $\nabla\chi_R$ is compactly supported. Then, the inner product in (\ref{eq:inner_p}) with $\Phi_\alpha$ replaced by $\Phi_{\alpha,R}$ is well defined in the sense of quadratic forms and can be re-estimated:
\begin{equation}
    \begin{aligned}
    \Re\langle \Phi_{\alpha,R}, (H_{f_\alpha}-E)\Phi_{\alpha,R}\rangle &= \langle \Phi_{\alpha,R}, (V-|\omega_\alpha|^2-E)\Phi_{\alpha,R}\rangle\\
    &\geq \langle \Phi_{\alpha,R}, (V- \left\vert\frac{\phi'(f_\alpha)}{\phi(f_\alpha)}\right\vert^2(1-\epsilon)(V-E)_+-E)\Phi_{\alpha,R}\rangle\\
    &\geq \langle \phi(f_\alpha)\psi, (V-(1-\epsilon)(V-E)_+-E)\phi(f_\alpha)\psi\rangle~,
    \end{aligned}
\end{equation}
where we have used that $\chi_R^2|\phi'/\phi|^2\leq 1$. Hence, we no longer need to adjust $\epsilon$ to cause the multiplicative factor of $(V-E)_+$ in (\ref{eq:inner_p}) be less than 1, i.e. $\eta_\epsilon$ in (\ref{eq:inner}) is now replaced by $\epsilon$, which, in turn, replaces the conclusion of Lemma \ref{lm:1} with:
\begin{equation}\label{eq:lm2_improved}
    ||\Phi_{\alpha,R} ||^2_2 \leq \frac{1}{\epsilon\delta}\left\{\Re\langle \Phi_{\alpha,R} , (H_{f_\alpha}-E)\Phi_{\alpha,R} \rangle+\int|\Phi_{\alpha,R} |^2(V-E)_-\right\}+\int_{\{V\leq E+\delta\}}|\Phi_{\alpha,R}|^2~,
\end{equation}

However, this comes at the cost that (\ref{eq:re_0}) is no longer true, i.e. 
\begin{equation}
\Re\langle \Phi_{\alpha,R}, (H_{f_\alpha}-E)\Phi_{\alpha,R}\rangle\neq 0 ~\mbox{.}
\end{equation}
We account for this through the following Lemma, which generalizes \cite[Lemma 3.7]{Hislop_Sigal_book_1996}. 
\begin{lemma}\label{lm:2}
Given the setup outlined above, with $\Phi_{\alpha,R}$ defined in (\ref{eq:phi_ar}), we have
\begin{equation}\label{eq:l2}
    \Re\langle\Phi_{\alpha,R},(H_{f_\alpha}-E)\Phi_{\alpha,R}\rangle =\langle\xi_\alpha \phi(f_\alpha)^2\psi,\psi\rangle~,
\end{equation}
where 
\begin{equation}\label{eq:xi}
    \xi_\alpha := |\nabla \chi_R|^2 +2(\nabla \chi_R\cdot\nabla f_\alpha)\chi_R\frac{\phi'(f_\alpha)}{\phi(f_\alpha)}~.
\end{equation}
\end{lemma}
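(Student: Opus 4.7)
The plan is to exploit the eigenvalue equation $H\psi=E\psi$ directly, rather than working with the gauge-transformed operator $H_{f_\alpha}$ in its form (\ref{eq:H_f}). Since $\Phi_{\alpha,R}=\phi(f_\alpha)(\chi_R\psi)$, the definition $H_{f_\alpha}=\phi(f_\alpha)H\phi(f_\alpha)^{-1}$ gives
\begin{equation*}
(H_{f_\alpha}-E)\Phi_{\alpha,R}=\phi(f_\alpha)(H-E)(\chi_R\psi),
\end{equation*}
and the commutator identity $(H-E)(\chi_R\psi)=\chi_R(H-E)\psi+[-\Delta,\chi_R]\psi$ collapses, via $H\psi=E\psi$, to
\begin{equation*}
(H_{f_\alpha}-E)\Phi_{\alpha,R}=-\phi(f_\alpha)\bigl[(\Delta\chi_R)\psi+2\nabla\chi_R\cdot\nabla\psi\bigr].
\end{equation*}

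Plugging this into $\langle\Phi_{\alpha,R},(H_{f_\alpha}-E)\Phi_{\alpha,R}\rangle$ and taking real parts, the identity $2\Re(\overline{\psi}\nabla\psi)=\nabla|\psi|^2$ converts the term involving $\nabla\psi$ into one involving $\nabla|\psi|^2$, yielding
\begin{equation*}
\Re\langle\Phi_{\alpha,R},(H_{f_\alpha}-E)\Phi_{\alpha,R}\rangle=-\int\chi_R\phi(f_\alpha)^2|\psi|^2\Delta\chi_R\,\ud^nx-\int\chi_R\phi(f_\alpha)^2\nabla\chi_R\cdot\nabla|\psi|^2\,\ud^nx.
\end{equation*}

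Next I would integrate by parts in the second integral, moving the gradient off $|\psi|^2$ onto $\chi_R\phi(f_\alpha)^2\nabla\chi_R$; the boundary terms vanish because $\nabla\chi_R$ is compactly supported. The product rule produces three contributions: $\phi(f_\alpha)^2|\nabla\chi_R|^2$, $\phi(f_\alpha)^2\chi_R\Delta\chi_R$, and $2\chi_R\phi(f_\alpha)\phi'(f_\alpha)\nabla f_\alpha\cdot\nabla\chi_R$. The key cancellation is that the new $\phi(f_\alpha)^2\chi_R\Delta\chi_R\,|\psi|^2$ term exactly cancels the first summand in the display above, leaving
\begin{equation*}
\Re\langle\Phi_{\alpha,R},(H_{f_\alpha}-E)\Phi_{\alpha,R}\rangle=\int\phi(f_\alpha)^2|\psi|^2\left[|\nabla\chi_R|^2+2\chi_R\frac{\phi'(f_\alpha)}{\phi(f_\alpha)}\nabla f_\alpha\cdot\nabla\chi_R\right]\ud^nx,
\end{equation*}
which is precisely the right-hand side of (\ref{eq:l2}) with $\xi_\alpha$ as defined in (\ref{eq:xi}).

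The only real obstacle is bookkeeping: one must be careful to justify the integration by parts (which is fine since $\nabla\chi_R$ has compact support and $\Phi_{\alpha,R}\in H^1\cap D(V)$, while $\phi(f_\alpha)$ and $\phi'(f_\alpha)$ are bounded on $\operatorname{supp}\nabla\chi_R$ in view of (\ref{eq:f_abdd})), and to track the sign on the Laplacian term that makes the cancellation work. No estimates are needed at this stage — this is an exact algebraic identity, and the bounds using (\ref{eq:defnR}) enter only when one subsequently estimates $\xi_\alpha$ in the proof of Theorem \ref{the:2}.
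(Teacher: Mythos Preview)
Your argument is correct and follows essentially the same route as the paper: both reduce to the commutator $[-\Delta,\chi_R]\psi$ via the eigenvalue equation, then integrate by parts so that the $\chi_R\Delta\chi_R$ term cancels. The only cosmetic difference is that you take real parts first (using $2\Re(\overline{\psi}\nabla\psi)=\nabla|\psi|^2$) and then integrate by parts, whereas the paper integrates by parts first and then observes the leftover cross term is antisymmetric.
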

Here, as mentioned earlier, $\Phi_{\alpha,R} \in H^1(\mathbb{R}^n) \cap D(V)$, implies that the inner product in (\ref{eq:l2}) is well-defined in the sense of quadratic forms.
\begin{proof}
Since, $(H-E) \psi = 0$, we obtain for the left side of (\ref{eq:l2}):
\begin{equation}\label{eq:12}
    \begin{aligned}
    \langle\Phi_{\alpha,R},(H_{f_\alpha}-E)\Phi_{\alpha,R}\rangle& = \langle\chi_R \phi(f_\alpha)\psi,\phi(f_\alpha)(H-E)\chi_R\psi\rangle \\
    &= \langle \chi_R \phi(f_\alpha)^2\psi,(-\Delta\chi_R -2\nabla\chi_R\cdot\nabla)\psi\rangle
    \end{aligned}
\end{equation}
Using integration by parts, we compute
\begin{align}
    \langle \chi_R & \phi(f_\alpha)^2\psi, -2\nabla\chi_R\cdot\nabla\psi\rangle = \langle [ 2(\Delta\chi_R)\chi_R\phi(f_\alpha)^2 + 2|\nabla\chi_R|^2\phi(f_\alpha)^2 \nonumber \\
    &  + 4(\nabla\chi_R\cdot\nabla f_\alpha)\chi_R \phi(f_\alpha)\phi'(f_\alpha) ]\psi,\psi\rangle +\langle 2\chi_R \phi(f_\alpha)^2 \nabla\chi_R \cdot \nabla\psi,\psi\rangle ~\mbox{,} \label{eq:lemma_commutator}
\end{align}
which, since the last term in (\ref{eq:lemma_commutator}) is antisymmetric, yields
\begin{align*}
    &~~~\Re\langle -2(\nabla\chi_R)\chi_R \phi(f_\alpha)^2\psi,\nabla\psi\rangle\\
    &=\langle [ (\Delta\chi_R)\chi_R\phi(f_\alpha)^2 +|\nabla\chi_R|^2\phi(f_\alpha)^2 + 2(\nabla\chi_R\cdot\nabla f_\alpha)\chi_R \phi(f_\alpha)\phi'(f_\alpha) ]\psi,\psi\rangle
\end{align*}
Combining this with (\ref{eq:12}), we thus arrive at the claim:
\begin{equation*}
    \Re\langle\Phi_\alpha,(H_{f_\alpha}-E)\Phi_\alpha\rangle =\langle [ |\nabla\chi_R|^2\phi(f_\alpha)^2 + 2(\nabla\chi_R\cdot\nabla f_\alpha)\chi_R \phi(f_\alpha)\phi'(f_\alpha) ]\psi,\psi\rangle ~\mbox{.}
\end{equation*}
\end{proof}

Lemma \ref{lm:2} allows to prove Theorem \ref{the:2}, which, by requiring the additional hypotheses in (H\ref{hyp:3}) on the both the weight (\ref{eq:p'p}) and the behavior of the Agmon metric at infinity (\ref{eq:rho2inf}), removes the positive lower bound on $\epsilon$ imposed in (\ref{eq:epsilon}):

\begin{proof}[Proof of Theorem \ref{the:2}]
First, notice that since $\chi_R^2 \leq 1$, the upper bounds on the right most sides of (\ref{eq:bd1})-(\ref{eq:bd2}) remain unaffected when replacing $\Phi_{\alpha}$ with $\Phi_{\alpha,R}$.
Thus, combining (\ref{eq:lm2_improved}) with the bounds in (\ref{eq:bd1})-(\ref{eq:bd2}), we have
\begin{equation}
    \begin{aligned}
    ||\Phi_{\alpha,R}||^2_2  & \leq\frac{1}{\epsilon\delta}\Re\langle \Phi_{\alpha,R}, (H_{f_\alpha}-E)\Phi_{\alpha,R}\rangle +\frac{C_{1}}{\epsilon\delta}+C_{2}~.
    \end{aligned}
\end{equation}
We recall that the constants $C_1$ and $C_2$ in (\ref{eq:bd1})-(\ref{eq:bd2}) are independent of $\alpha > 0$.

Then, applying Lemma \ref{lm:2}, we obtain
\begin{align*}
        ||\Phi_{\alpha,R}||^2_2 & \leq \frac{1}{\epsilon\delta}\langle\xi_\alpha \phi(f_\alpha)^2\psi,\psi\rangle +\frac{C_{1}}{\epsilon\delta}+C_{2} \\
        &\leq \frac{||\psi||^2_2}{\epsilon\delta}\sup_{x\in \mathrm{supp}|\nabla\chi_R|}\left\{\xi_\alpha \phi(f_\alpha)^2\right\} +\frac{C_{1}}{\epsilon\delta}+C_{2} \\
        &=\frac{||\psi||^2_2}{\epsilon\delta}\sup_{x\in \mathrm{supp}|\nabla\chi_R|}\left\{\left[|\nabla \chi_R|^2 +2|\nabla \chi_R | |\nabla f_\alpha|\chi_R\frac{\phi'(f_\alpha)}{\phi(f_\alpha)}\right] \phi(f_\alpha)^2\right\} +   \frac{C_{1}}{\epsilon\delta}+C_{2} \\
        &\leq \frac{||\psi||^2_2}{\epsilon\delta}\sup_{x\in \mathrm{supp}|\nabla\chi_R|}\left\{[|\nabla \chi_R|^2+2 |\nabla\chi_R| |\nabla f_0| \phi(f)^2\right\} +\frac{C_{1}}{\epsilon\delta}+C_{2} =: a_{\epsilon,\delta} < \infty~,
\end{align*}
where, in the last step, we used (\ref{eq:defnR}) and that $|\nabla f_\alpha|^2 \leq |\nabla f_0|^2$, see (\ref{eq:df_a}). Again, since the constant $a_{\epsilon,\delta}$ is $\alpha$-independent, Fatou's Lemma thus yields
\begin{equation} \label{eq:proof_modf_thm_boundnoncompact}
    ||\Phi_{R}||^2_2:=\lim_{\alpha\to0^+}||\Phi_{\alpha,R}||^2_2\leq a_{\epsilon,\delta} <\infty~.
\end{equation}

In summary, using (\ref{eq:proof_modf_thm_boundnoncompact}) and (\ref{eq:bd2}), we can bound the integral in the claim:
\begin{align*}
    \int \phi & ((1-\epsilon)\rho_E(x))^2|\psi(x)|^2 ~\ud^n x  =\left\{ \int_{\{V > E+\delta\} }+\int_{\{V \leq E+\delta \}} \right\}\phi((1-\epsilon)\rho_E(x))^2|\psi(x)|^2 ~\ud^n x\\
    &\leq \int_{ \{V > E+\delta \}} \phi((1-\epsilon)\rho_E(x))^2|\psi(x)|^2 ~\ud^n x+ C_2 \\
    &= \int_{ \{V > E+\delta \}} (1-\chi^2_R)\phi((1-\epsilon)\rho_E(x))^2|\psi(x)|^2 ~\ud^n x+||\Phi_{R}||^2_2 + C_2\\
    &\leq \Vert \psi \Vert_2^2 \sup_{x \in \overline{B_{R+1}(0)}} \left\{\phi((1-\epsilon)\rho_E(x))^2 \right\} + a_{\epsilon, \delta} + C_2 ~\mbox{,}
\end{align*}
where the first term in the last step is finite because it is the supremum of a continuous function over a compact domain. 
\end{proof}

\subsection{Concluding remarks} \label{sec:L2:sub_conclremarks}

We conclude this section with two remarks about about the scope of Theorems \ref{the:1} and \ref{the:2}: The first relates $\delta$ in the integrability condition (\ref{eq:l_2}) to the distance of the eigenvalue under consideration to the essential spectrum, the second provides examples of potentials which satisfy the hypotheses of Theorems \ref{the:1}-\ref{the:2}.

\subsubsection{Integrability condition and the essential spectrum}  \label{sec:L2:sub_conclremarks_ess}
The classical results for exponential decay of the wave function, both isotropic \cite{Froese_Herbst_CMP_1982, BCH_HelvPhysActa_1997, Combes_Thomas_CMP_1973, Slaggie_Wichmann_JMP_1962} and anisotropic \cite{Agmon_lectures1982}, consider eigenvalues $E$ below the bottom of the essential spectrum $\sigma_{\mathrm{ess}}$ of the Schr\"odinger operator. Indeed, Agmon decay in its most basic form, assumes compactness of the enlarged classically allowed region $\{ V \leq E  + \delta \}$, for some $\delta > 0$, which in turn implies that $E \leq \inf \sigma_{\mathrm{ess}} -\delta$ (e.g. use the well-known Persson characterization for the essential spectrum \cite{Persson_1960_essentialsp} (see also \cite[Theorem 14.11, chapter 14.4]{Hislop_Sigal_book_1996}). The following shows that $\delta$ in the integrability condition (\ref{eq:l_2}) of Theorems \ref{the:1}-\ref{the:2} serves a similar purpose, specifically:
\begin{prop} \label{prop:essentialspectrum}
Given a Schr\"odinger operator $H = -\Delta+V$ where
\begin{enumerate}
    \item[(i)] $V$ is real-valued and bounded below with $V\geq\inf_{x\in\mathbb{R}^n}{V}=:m_V>-\infty$ and
    \item[(ii)] $V$ is relatively $(-\Delta)$-bounded with relative bound strictly less than one, i.e.
    \begin{equation}\label{eq:re_bd}
        \lim_{\lambda\to + \infty}||V (-\Delta+\lambda)^{-1}||<1~,
    \end{equation}
\end{enumerate}
in particular, $H$ is self-adjoint on the domain $H^2(\mathbb{R}^n)$. 

Suppose that for some $E_0 \in\mathbb{R}$, there exists $\delta>0$ such that the set
\begin{equation}\label{eq:A}
    A_{E_0;\delta}:=\{x\in\mathbb{R}^n:V \leq E_0+\delta\}
\end{equation}
has {\em{finite}} Lebesgue measure. Then, one has
\begin{equation}
    \sigma_{ess}(H) \subseteq [E_0 + \delta, + \infty) ~\mbox{.}
\end{equation}
\end{prop}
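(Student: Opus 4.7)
My plan is to invoke Persson's characterization of the bottom of the essential spectrum (see \cite[Theorem 14.11]{Hislop_Sigal_book_1996}), which under hypotheses (i)--(ii) asserts
$$\inf \sigma_{\mathrm{ess}}(H) = \lim_{R\to\infty} \Sigma_R(H)~, \quad \Sigma_R(H) := \inf\left\{\langle \psi, H\psi\rangle : \psi \in C_c^\infty(\mathbb{R}^n \setminus \overline{B_R}),~\|\psi\|_2 = 1\right\}~\mbox{.}$$
Thus it suffices to show $\liminf_{R\to\infty} \Sigma_R(H) \geq E_0 + \delta$.

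Fix such a $\psi$ and abbreviate $A := A_{E_0;\delta}$. Using the pointwise bound $V \geq (E_0+\delta)\chi_{A^c} + m_V \chi_A$ together with $\mathrm{supp}(\psi) \subseteq B_R^c$ and $\|\psi\|_2=1$, I obtain
$$\langle\psi,H\psi\rangle \geq \|\nabla\psi\|_2^2 + (E_0+\delta) - (E_0+\delta-m_V)\int_{A\cap B_R^c}|\psi|^2~\mbox{.}$$
The crux is to bound $\int_{A\cap B_R^c}|\psi|^2$ by a small multiple of $\|\nabla\psi\|_2^2$, plus a $\psi$-independent remainder vanishing in $R$.

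For $n \geq 3$ I would combine Hölder's inequality (with dual exponents $n/2$ and $n/(n-2)$) with the Sobolev embedding $\dot H^1(\mathbb{R}^n) \hookrightarrow L^{2n/(n-2)}(\mathbb{R}^n)$ to estimate
$$\int_{A\cap B_R^c}|\psi|^2 \leq |A\cap B_R^c|^{2/n}\,\|\psi\|_{2n/(n-2)}^2 \leq C_n^2\,|A\cap B_R^c|^{2/n}\,\|\nabla\psi\|_2^2~\mbox{.}$$
Since $|A|<\infty$ by assumption, the prefactor $|A\cap B_R^c|^{2/n}$ tends to zero as $R\to\infty$, so choosing $R$ so large that $(E_0+\delta-m_V)\,C_n^2\,|A\cap B_R^c|^{2/n}\leq 1$ absorbs the error into $\|\nabla\psi\|_2^2$ and gives $\langle\psi,H\psi\rangle \geq E_0+\delta$ uniformly in the admissible $\psi$'s; Persson's identity then yields the claim.

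In the low-dimensional cases $n \in \{1,2\}$ the endpoint Sobolev exponent is unavailable; I would substitute a Gagliardo-Nirenberg inequality $\|\psi\|_{2q}^2 \leq C\,\|\nabla\psi\|_2^{2\theta}\,\|\psi\|_2^{2(1-\theta)}$ for some $q > 1$, $\theta \in (0,1)$, and apply Young's inequality to split $\|\nabla\psi\|_2^{2\theta} \leq \eta\,\|\nabla\psi\|_2^2 + C_\eta$. Both resulting pieces inherit a prefactor involving $|A\cap B_R^c|$ that vanishes as $R\to\infty$, enabling the same absorption. I do not anticipate a conceptual obstacle; the main care is the uniform-in-$\psi$ Young-type bookkeeping in $n = 1, 2$, which is a routine variation of the $n \geq 3$ argument.
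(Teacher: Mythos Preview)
Your argument is correct but takes a genuinely different route from the paper's. The paper proceeds via a relatively-compact perturbation: it defines $W := \chi_{\{V \leq E_0+\delta\}}\,(E_0+\delta - V)$, notes that $0 \leq W \leq E_0+\delta-m_V$ is bounded and supported on the finite-measure set $A_{E_0;\delta}$ (hence $W \in L^p(\mathbb{R}^n)$ for every $p$), and concludes that $W$ is $(-\Delta)$-relatively compact and therefore, by hypothesis (ii), $H$-relatively compact. Weyl's theorem then gives $\sigma_{\mathrm{ess}}(H)=\sigma_{\mathrm{ess}}(H+W)$, while $V+W \geq E_0+\delta$ everywhere forces $\sigma(H+W)\subseteq[E_0+\delta,\infty)$, and the claim follows in one line. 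Your route via Persson's formula combined with Sobolev/Gagliardo--Nirenberg is more hands-on: it makes the underlying mechanism --- that the bad set $A$ has vanishing remaining measure outside large balls --- fully explicit in the quadratic-form estimate, at the price of the dimension split and the Young-type bookkeeping in $n\in\{1,2\}$. The paper's argument is shorter and dimension-free; yours avoids the abstract Weyl perturbation machinery entirely and would adapt more readily to settings where relative compactness criteria are less convenient. One small clarification on your low-dimensional step: after multiplying the Young bound $\|\nabla\psi\|_2^{2\theta}\leq \eta\,\|\nabla\psi\|_2^2 + C_\eta$ through by the H\"older prefactor $|A\cap B_R^c|^{1-1/q}$, both resulting terms carry that prefactor and hence vanish as $R\to\infty$, so the absorption works exactly as you outlined (in fact $\eta=1$ already suffices).
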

In view of Theorems \ref{the:1}-\ref{the:2}, observe that, since
\begin{equation}
\chi_{\{V\leq E+\delta\}} \leq \chi_{\{V\leq E+\delta\}}\phi\left((1-\epsilon)\rho_E\right) ~\mbox{,}
\end{equation}
Proposition \ref{prop:essentialspectrum} interprets $\delta$ in the integrability condition (\ref{eq:l_2}) as a lower bound on the distance of the eigenvalue $E$ to the bottom of the essential spectrum of $H$.

The idea of the proof of Proposition \ref{prop:essentialspectrum} is to show that modifying the potential appropriately on the finite measure set $A_{E_0;\delta}$ does not alter the essential spectrum. To keep the paper self-contained, we include the brief argument as follows:
\begin{proof}
Consider the perturbation
\begin{equation}
    W:=\chi_{\{V\leq E_0+\delta\}}\cdot(E_0+\delta - V )~,
\end{equation}
so that $0 \leq W \leq E_0 + \delta - m_V$, which by (\ref{eq:A}), yields $W\in L^2 \cap L^\infty(\mathbb{R}^n)$ with
\begin{equation}
    ||W||_{2} \leq (E_0+\delta - m_V) |A_{E_0,\delta}| ~.
\end{equation}


Since $W\in L^2(\mathbb{R}^n)$, we obtain that $W$ is relatively compact with respect to $(-\Delta)$ (use e.g. Theorem 14.9 in \cite{Hislop_Sigal_book_1996}). Using (\ref{eq:re_bd}), this shows that $W$ is also relatively compact with respect to $H$. In particular, we conclude that
\begin{equation}\label{eq:sigma}
    \sigma_{\rm ess}{(H)} = \sigma_{\rm ess}{(H+W)}~.
\end{equation}

By construction, one has $V+W\geq E_0+\delta$, whence (\ref{eq:sigma}) yields
\begin{equation}
    \sigma_{\rm ess}{(H)} = \sigma_{\rm ess}{(H+W)}\subseteq \sigma{(H+W)}\subseteq[E_0+\delta,+\infty)~,
\end{equation}
proving the claim.
\end{proof}

\subsubsection{Example for potentials satisfying the integrability condition} \label{sec:L2:sub_conclremarks_example}
Our second remark constructs explicit examples in dimension one ($n=1$) for potentials which satisfy the hypotheses of Theorems \ref{the:1}-\ref{the:2}, specifically the integrability condition in (\ref{eq:l_2}). In particular, this shows that these results apply to a nonempty class of Schr\"odinger operators. In the following construction, the dimension ($n=1$) mainly enters through the simple form of the Agmon metric on $\mathbb{R}$, see (\ref{eq:example_1d_1}) - (\ref{eq:example_1d_2}) below.

In one dimension, take a potential $V_0\in \mathcal{C}(\mathbb{R})$ with $V_0 \leq 0$, $\lim_{x\to\infty}V_0(x) = 0$, and so that its negative discrete spectrum satisfies $\sigma_{\rm disc} (-\Delta+V_0) \cap (-\infty, 0) \neq \emptyset $; in particular, then $\sigma_{\rm ess}(-\Delta+V_0) = [0,+\infty)$. We thus have a negative eigenvalue $E_0$ of $(-\Delta+V_0)$, $-\infty< m_{V_0} :=\inf V_0 < E_0 < 0$, with associated eigenfunction $\psi_0$, $||\psi_0||_2 = 1$. 

Given a monotonic weight function $\phi \in \mathcal{C}^1([0,+\infty))$ for which the conditions (\ref{eq:phi_1}) and (\ref{eq:mphi}) hold, take a continuous $V_1 \in L^2 \cap L^\infty(\mathbb{R})$, $V_1 \leq 0$, such that $V:=V_0+V_1$ satisfies
\begin{equation} \label{eq:example_construction1}
m_{V_0} \leq V \leq 0 ~\mbox{, } ~\{x \in \mathbb{R} ~:~ V(x) = m_{V_0} \} ~\mbox{is unbounded} ~\mbox{, }
\end{equation}
and that
\begin{equation}\label{eq:finite}
    \int \chi_{\{V \leq  E_0 \}}(x) ~ \phi(|m_{V_0} |^{1/2}|x|)^2 ~\ud x <\infty ~.
\end{equation}
Note that since $m_{V_0} < E_0 < 0$ and $\lim_{x\to\infty}V_0(x) = 0$, the set $\{ V_0 \leq E_0 \}$ is bounded, whence such $V_1$ can always be found independent of how $V_0$ decays to zero. Morally, the perturbation $V_1$ is used to add negative spikes to the original potential $V_0$ reaching down to $m_{V_0}$ on the set $\{x \in \mathbb{R} ~:~ V_0 > E_0\}$, so that $V_0 + V_1$ satisfies the condition in (\ref{eq:finite}).
\begin{remark}
To construct $V_1$ more explicitly, fix $0 < R < +\infty$ such that
\begin{equation}
\{x \in \mathbb{R} ~:~ V_0(x) \leq E_0 \} \subseteq [-\frac{R}{2} , \frac{R}{2}] ~\mbox{.}
\end{equation}

Take $(c_j)_{j \in \mathbb{N}}$ to be an increasing sequence in the set $\{x \in (0, + \infty) ~:~ V_0(x) > E_0 \}$ such that $\lim_{j \to \infty} c_j = + \infty$. For $j \in \mathbb{N}$, let $0 < l_j < 1$ with $\sum_{j \in \mathbb{N}} l_j < \infty$, so that the collection of intervals $[c_j - \frac{l_j}{2}, c_j + \frac{l_j}{2}]$ are mutually disjoint and that
\begin{equation}
[c_j - \frac{l_j}{2}, c_j + \frac{l_j}{2}] \subseteq \{x \in (0, + \infty) ~:~ V_0(x) > E_0 \} ~\mbox{.}
\end{equation}

Now, obtain $V \in \mathcal{C}(\mathbb{R})$ by modifying $V_0$ continuously on each of the mutually disjoint intervals $[c_j - \frac{l_j}{2}, c_j + \frac{l_j}{2}]$ so that, for $j \in \mathbb{N}$, one has
\begin{align} \label{eq:constructionV_mod-1}
V(x) = m_{V_0} ~\mbox{, for } x \in [c_j - \frac{l_j}{4}, c_j + \frac{l_j}{4}] ~\mbox{,} \\
m_{V_0} \leq V(x) \leq 0 ~\mbox{, for } x \in [c_j - \frac{l_j}{2}, c_j + \frac{l_j}{2}] ~\mbox{,} 
\end{align}
while not changing $V_0$ otherwise, i.e.
\begin{equation} \label{eq:constructionV_mod}
V(x) = V_0(x) ~\mbox{, for } x \in \mathbb{R} \setminus \left\{ \bigcup_{j \in \mathbb{N}} [c_j - \frac{l_j}{2}, c_j + \frac{l_j}{2}] \right\} ~\mbox{.}
\end{equation}
Set $V_1:= V - V_0 \in L^{\infty}(\mathbb{R})$. Since $m_{V_0} \leq V_1 \leq 0$, the properties of $V$ in (\ref{eq:constructionV_mod-1})-(\ref{eq:constructionV_mod}) imply
\begin{equation} \label{eq:example_summcond-1}
\Vert V_1 \Vert_2^2 \leq m_{V_0}^2 \sum_{j \in \mathbb{N}} l_j < +\infty ~\mbox{,}
\end{equation} 
whence $V_1 \in L^2 \cap L^\infty(\mathbb{R})$ as claimed. By construction, we also obtain
\begin{equation}
\bigcup_{j \in \mathbb{N}} [c_j - \frac{l_j}{4}, c_j + \frac{l_j}{4}] \subseteq \{x \in \mathbb{R} ~:~ V(x) = m_{V_0} \} ~\mbox{,}
\end{equation}
verifying (\ref{eq:example_construction1}). Finally, since
\begin{equation}
\{V \leq  E_0 \} \subseteq \bigcup_{j \in \mathbb{N}} [c_j - \frac{l_j}{2}, c_j + \frac{l_j}{2}] \cup [-\frac{R}{2}, \frac{R}{2}] ~\mbox{,}
\end{equation}
we conclude, in analogy to (\ref{eq:interpretintegrabilitycond_1d_summability}) in Remark \ref{rem:integrabilitycond_1d}, that
\begin{align} \label{eq:example_summcond}
\int \chi_{\{V \leq  E_0 \}}(x) ~&  \phi(|m_{V_0} |^{1/2}|x|)^2 ~\ud x \leq  R \cdot \phi\left(|m_{V_0} |^{1/2} R \right)^2 \nonumber \\
   & + \sum_{j \in \mathbb{N}} l_j \cdot \phi\left(|m_{V_0} |^{1/2} \left(c_j + \frac{1}{2} \right) \right)^2 ~\mbox{,}
\end{align}
where we used that $\phi$ is increasing and that $0< l_j < 1$, for all $j \in \mathbb{N}$.

In summary, choosing the sequence $(l_j)$ to decay fast enough (adapted to the rate of increase of $\phi$) such that the right hand side of (\ref{eq:example_summcond}) is finite, we obtain $V$ (and thus implicitly $V_1= V - V_0$) such that (\ref{eq:finite}) holds. Here, we note that since by hypothesis $\phi$ has bounded logarithmic derivative (\ref{eq:mphi}), i.e. by Gr\"onwall's inequality $\phi(t) \leq \phi(0) \mathrm{e}^{M_\phi t}$ for $t \in [0,+\infty)$, (\ref{eq:example_summcond}) is always satisfied if
\begin{equation}
l_j \leq \frac{1}{j^2} \mathrm{e}^{-2 M_\phi \vert m_{V_0} \vert^{1/2} (c_j + \frac{1}{2})} ~\mbox{;}
\end{equation}
of course, a slower decay of $(l_j)$ is possible depending on the rate of increase of $\phi$.
\end{remark}

Since $V_1\in L^2(\mathbb{R})$, it is relatively compact to $(-\Delta+V_0)$. Hence, we know that 
\begin{equation}
    \sigma_{\rm ess}(-\Delta+V_0+V_1) = \sigma_{\rm ess}(-\Delta+V_0) = [0,\infty)~.
\end{equation}

Moreover, by construction we have
\begin{align}
m_{V_0} < \langle \psi_0 , H \psi_0 \rangle & = \langle \psi_0 , (-\Delta + V_0) \psi_0 \rangle + \underbrace{\langle \psi_0, V_1 \psi_0 \rangle}_{< 0} < E_0 ~\mbox{,}
\end{align}
thus the min-max principle guarantees the existence of an eigenvalue $E$ of $H$ with $m_{V_0} < E \leq E_0$ of $H$ and an associated eigenfunction $\psi$, $||\psi||_2=1$. Since $V$ is bounded, the operator $H=-\Delta + V$ is self-adjoint on the domain $D(H)=H^2(\mathbb{R}) \subseteq D(V)$, where $D(V)$ is defined in (H\ref{hyp:1}). In particular, by the Sobolev Embedding Theorem, the hypothesis $\psi \in L^\infty(\mathbb{R})$ is trivially satisfied.

Now, we can bound the Agmon distance to the origin by the following:
\begin{equation} \label{eq:example_1d_1}
    \rho_E(x) = \mathrm{sgn}(x) \int_0^x(V(t)-E)_+^{1/2} ~\ud t \leq |m_{V_0}|^{1/2}|x|~, x \in \mathbb{R} ~\mbox{,}
\end{equation}
which, together with (\ref{eq:finite}), implies that for every $0 < \epsilon < 1$,
\begin{align} \label{eq:example_1d_2}
    ||\chi_{\{V_0+V_1 \leq E \}}\phi((1 -\epsilon) \rho_E)||_2 & \leq ||\chi_{\{V_0+V_1 \leq E_0 \}}\phi((1 -\epsilon) \rho_E)||_2 \nonumber \\
       & \leq||\chi_{\{V_0+V_1 \leq E_0 \}}\phi(|m_V|^{1/2}|x|)||_2<\infty~.
\end{align}
Hence, the eigenfunction $\psi$ associated with $E$ satisfies the hypotheses of Theorem \ref{the:1} for every $0< \delta < E_0 - E$. Notice that by (\ref{eq:example_construction1}), since $m_{V_0} < E$, the set 
\begin{equation}
\{x \in \mathbb{R} ~:~ V(x) = m_{V_0} \} \subseteq \{x \in \mathbb{R} ~:~ V(x) \leq E \}
\end{equation} 
is unbounded, in particular, the classically allowed region for the potential $V$ and the eigenvalue $E$ of $H$ is not compact.

Finally, since by Remark \ref{remark:agmon1d_infty}, (\ref{eq:rho2inf}) in (H\ref{hyp:3}) always holds for dimension $n=1$, the non-emptiness of the content of Theorem \ref{the:2} follows for all admissible weights $\phi$ which also satisfy the condition in (\ref{eq:p'p}).

\section{Point-wise decay} \label{sec:ptw}

In this last section we show that the $L^2$ decay in Theorem \ref{the:1} and \ref{the:2} implies point-wise decay {\em{if the potential is sufficiently regular}}. The case where the weight function $\phi$ is exponential is well known \cite[chapter 5]{Agmon_lectures1982}, see also \cite[chapter 3.5]{Hislop_Sigal_book_1996}. Here, we will modify these arguments appropriately to obtain point-wise bounds for general admissible weights $1 \leq \phi \in \mathcal{C}^1([0,+\infty))$ with bounded logarithmic derivative. The latter in particular accounts for the power law weights $\phi(t) = (1 + t)^r$ with $r >0$. In the following we will consider potentials $V \in C_b^k(\mathbb{R}^n)$, for $k \in \mathbb{N}$, where, as usual, $C_b^k(\mathbb{R}^n)$ denotes the $\mathcal{C}^k$-functions whose partial derivatives, up to order $k$, are all bounded.

Since, for all $s \geq 0$, $(-\Delta + 1)^{-1}: H^{s}(\mathbb{R}^n) \to H^{s+2}(\mathbb{R}^n)$, regularity of the potential $V \in C_b^k(\mathbb{R}^n)$ implies that any eigenfunction $\psi \in L^2(\mathbb{R})$ of $H = -\Delta + V$ automatically satisfies $\psi \in H^{k +2}(\mathbb{R}^n)$. In particular, taking $k \in \mathbb{N}$ sufficiently large to ensure $k + 2 > \frac{n}{2}$ or equivalently $k > (n-4)/2$, one can then prove local bounds for the $L^\infty$-norm of $\psi$ by its $L^2$-norm, specifically:
\begin{lemma}[ cf. Theorem 5.1 in \cite{Agmon_lectures1982} or Lemma 3.9 in \cite{Hislop_Sigal_book_1996}   ] \label{lm:3}
Let $V \in C^k_b(\mathbb{R}^n)$ for $k \in \mathbb{N}$ with $k > (n-4)/2$. Suppose $H\psi = E\psi$ for $\psi \in L^2(\mathbb{R}^n)$ and $E \in \mathbb{R}$. Then, $\psi \in \mathcal{C}_b^0(\mathbb{R}^n)$ vanishing at infinity and there exists $C_{E,V}$ depending on E and $\sup_{x\in\mathbb{R}^n}|V^{(\alpha)}(x)|$, $|\alpha| = 0,...,k$, such that for all $x_0\in\mathbb{R}^n$, one has
\begin{equation}\label{eq:lm3}
    \max_{x\in B_{1/2}(x_0)}{|\psi(x)|}\leq C_{E,V}||\psi||_{L^2(B_1(x_0))}~.
\end{equation}
\end{lemma}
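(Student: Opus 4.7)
The plan is an interior elliptic regularity bootstrap combined with a Sobolev embedding, executed in a translation-invariant fashion so that every constant is uniform in $x_0$. The global statements that $\psi$ is continuous and vanishes at infinity are already recorded in Remark \ref{rem:linftyassumption} (which yields $\psi \in H^{k+2}(\mathbb{R}^n)$, whence $\psi \in \mathcal{C}_b^0(\mathbb{R}^n)$ vanishing at infinity since $k+2 > n/2$); the substantive task is the quantitative local bound (\ref{eq:lm3}).

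To that end, I rewrite $H\psi = E\psi$ as $-\Delta\psi = (E-V)\psi$ and fix a nested family of concentric balls
\begin{equation*}
B_{1/2}(x_0) = \Omega_{k+1} \subset \Omega_k \subset \cdots \subset \Omega_0 = B_1(x_0),
\end{equation*}
whose radii decrease linearly from $1$ to $1/2$, so that consecutive balls are separated by a uniform distance $\tfrac{1}{2(k+1)}$ independent of $x_0$. I then iterate the standard interior $L^2$-regularity estimate for $-\Delta$: at step $j \in \{0,1,\ldots,k\}$, assume inductively that $\psi \in H^j(\Omega_j)$ with norm controlled by $\|\psi\|_{L^2(B_1(x_0))}$. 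Since $V \in C_b^k(\mathbb{R}^n)$ acts as a bounded multiplier on $H^j$ for $0 \leq j \leq k$ (its derivatives up to order $j$ being bounded), it follows that $(E-V)\psi \in H^j(\Omega_j)$ with norm controlled by $\|\psi\|_{H^j(\Omega_j)}$, and interior elliptic regularity on the pair $\Omega_{j+1} \subset \Omega_j$ yields
\begin{equation*}
\|\psi\|_{H^{j+2}(\Omega_{j+1})} \leq C_j\bigl(\|(E-V)\psi\|_{H^j(\Omega_j)} + \|\psi\|_{L^2(\Omega_j)}\bigr).
\end{equation*}
Taking $j = k$ completes the induction and produces
\begin{equation*}
\|\psi\|_{H^{k+2}(B_{1/2}(x_0))} \leq C_{E,V}\,\|\psi\|_{L^2(B_1(x_0))},
\end{equation*}
with $C_{E,V}$ depending on $E$ and $\sup_{x\in\mathbb{R}^n}|V^{(\alpha)}(x)|$ for $|\alpha| \leq k$, but independent of $x_0$ thanks to the translation invariance of $-\Delta$ and the use of a single fixed cutoff/ball family translated to each $x_0$.

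To conclude, the Sobolev embedding $H^{k+2}(B_{1/2}(x_0)) \hookrightarrow C^0(\overline{B_{1/2}(x_0)})$, applicable because $k+2 > n/2$ and whose embedding constant is translation invariant, yields $\max_{x \in B_{1/2}(x_0)}|\psi(x)| \leq D_n\,\|\psi\|_{H^{k+2}(B_{1/2}(x_0))}$. Combining this with the previous display gives (\ref{eq:lm3}). The principal difficulty is not any single analytic step, all of which are classical, but the bookkeeping required to guarantee $x_0$-independence of every constant throughout the bootstrap; this is secured by working with a fixed nested ball family translated to each $x_0$ and by repeatedly invoking the translation invariance of $-\Delta$ and of the cutoffs.
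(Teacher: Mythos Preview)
Your argument is correct. Note, however, that the paper does not supply its own proof of this lemma: it is stated with the attribution ``cf.\ Theorem 5.1 in \cite{Agmon_lectures1982} or Lemma 3.9 in \cite{Hislop_Sigal_book_1996}'' and then used directly as input to Theorem~\ref{thm:4}. What you have written is precisely the standard proof found in those references---iterate the interior elliptic estimate through a nested chain of balls, using that $V\in C_b^k$ keeps $(E-V)\psi$ in $H^j$ for $j\le k$, then apply the Sobolev embedding once $k+2>n/2$---and your emphasis on translation invariance to secure $x_0$-independence of all constants is exactly the feature that makes the lemma useful downstream. One cosmetic remark: as written, your induction advances the Sobolev index by one per step even though each application of interior regularity gains two, so you use roughly twice as many nested balls as necessary; this is harmless for the conclusion.
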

Here, we mention that the condition $k + 2 > \frac{n}{2}$ ensures that $\widehat{\psi} \in L^1(\mathbb{R}^n)$, in particular $\psi \in \mathcal{C}_b^0(\mathbb{R}^n)$ vanishing at infinity, so that the boundedness condition on $\psi$ in (H\ref{hyp:1}) part (a) holds true for any such eigenfunction, irrespective of the dimension $n$; see also Remark \ref{rem:linftyassumption}. Relying on Lemma \ref{lm:3}, we can then claim:
\begin{theorem}[Point-wise Bound]\label{thm:4}
Let $H = -\Delta + V$ be as in (H\ref{hyp:1}) and assume that moreover $V \in C^k_b(\mathbb{R}^n)$ for $k \in \mathbb{N}$ with $k > (n-4)/2$. Let $\psi \in L^2(\mathbb{R}^n)$ be an eigenfunction of $H$ with associated eigenvalue $E \in \mathbb{R}$ such that, for some weight function $1 \leq \phi \in \mathcal{C}^1([0,+\infty))$ satisfying (\ref{eq:mphi}) and $0 < \epsilon < 1$, one has
\begin{equation*}
    \phi((1-\epsilon)\rho_E) \psi \in L^2(\mathbb{R}^n) ~\mbox{.}
\end{equation*}

Then, there exists a constant $0< C_\epsilon < \infty$ such that 
\begin{equation} \label{eq:thm4}
    |\psi(x)| \leq C_{\epsilon} ~\phi((1-\epsilon)\rho_E(x))^{-1}~, ~\forall x\in\mathbb{R}^n ~\mbox{.}
\end{equation}
\end{theorem}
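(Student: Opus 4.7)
The plan is to combine the local elliptic regularity bound from Lemma \ref{lm:3} with a local comparison of the weight $\phi((1-\epsilon)\rho_E(\cdot))$ on unit balls, the latter exploiting the bounded logarithmic derivative of $\phi$ together with the Lipschitz character of $\rho_E$. The key new ingredient beyond the classical exponential case (see \cite[chapter 5]{Agmon_lectures1982} and \cite[chapter 3.5]{Hislop_Sigal_book_1996}) is that exponential rescaling is replaced by a Grönwall-type estimate applied to (\ref{eq:mphi}).

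First, I would apply Lemma \ref{lm:3} at an arbitrary point $x_0 \in \mathbb{R}^n$ to get
\begin{equation*}
|\psi(x_0)| \leq C_{E,V}\, \|\psi\|_{L^2(B_1(x_0))}.
\end{equation*}
Multiplying by $\phi((1-\epsilon)\rho_E(x_0))$, the claim (\ref{eq:thm4}) reduces to producing an $x_0$-independent bound on
\begin{equation*}
\phi((1-\epsilon)\rho_E(x_0))^2 \int_{B_1(x_0)} |\psi(x)|^2\, \mathrm{d}^n x.
\end{equation*}

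Next, I would establish the local comparison $\phi((1-\epsilon)\rho_E(x_0)) \leq K \phi((1-\epsilon)\rho_E(x))$ for $x \in B_1(x_0)$, with $K$ independent of $x_0$. Since $V \in \mathcal{C}^k_b(\mathbb{R}^n)$, the quantity $L := \sup_{y \in \mathbb{R}^n}(V(y)-E)_+^{1/2}$ is finite. Invoking (\ref{eq:agmon}), the function $\rho_E$ is globally Lipschitz with constant $L$, so $|\rho_E(x) - \rho_E(x_0)| \leq L$ whenever $|x-x_0|\leq 1$. On the other hand, (\ref{eq:mphi}) reads $|(\log \phi)'| \leq M_\phi$, which by Grönwall (equivalently, direct integration) yields $\phi(t) \leq \phi(s)\, \mathrm{e}^{M_\phi |t-s|}$ for all $s,t \in [0,+\infty)$. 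Composing these two estimates gives, for $x \in B_1(x_0)$,
\begin{equation*}
\phi((1-\epsilon)\rho_E(x_0)) \leq \mathrm{e}^{M_\phi (1-\epsilon) L}\, \phi((1-\epsilon)\rho_E(x)).
\end{equation*}

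Combining these ingredients, one obtains
\begin{equation*}
\phi((1-\epsilon)\rho_E(x_0))^2 \int_{B_1(x_0)} |\psi(x)|^2\, \mathrm{d}^n x \leq \mathrm{e}^{2 M_\phi (1-\epsilon) L} \int \phi((1-\epsilon)\rho_E(x))^2 |\psi(x)|^2\, \mathrm{d}^n x,
\end{equation*}
and the right-hand side is finite by the hypothesis $\phi((1-\epsilon)\rho_E)\psi \in L^2(\mathbb{R}^n)$. Taking square roots and folding in the constant from Lemma \ref{lm:3} delivers the pointwise bound (\ref{eq:thm4}) with
\begin{equation*}
C_\epsilon := C_{E,V}\, \mathrm{e}^{M_\phi (1-\epsilon) L}\, \|\phi((1-\epsilon)\rho_E)\psi\|_{L^2(\mathbb{R}^n)}.
\end{equation*}
There is no serious obstacle here; the one conceptual point is recognizing that (\ref{eq:mphi}) (bounded logarithmic derivative) is exactly the condition that allows translating the exponential-weight proof to the present more general setting, because it forces $\phi$ to behave no worse than an exponential on bounded intervals, which is all that is needed for the local $B_1(x_0)$ comparison.
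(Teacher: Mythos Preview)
Your proof is correct and follows essentially the same approach as the paper's: both apply Lemma~\ref{lm:3} locally, then use the bounded logarithmic derivative (\ref{eq:mphi}) together with the Lipschitz bound on $\rho_E$ coming from the boundedness of $V$ to compare the weight across a unit ball, and finally absorb everything into the global weighted $L^2$-norm. The only cosmetic differences are that the paper bounds the ratio $\phi((1-\epsilon)\rho_E(x))/\phi((1-\epsilon)\rho_E(y))$ over all $x,y\in B_1(x_0)$ (incurring a factor $2$ in the exponent), whereas you compare directly against the center $x_0$, yielding the slightly sharper constant $\mathrm{e}^{M_\phi(1-\epsilon)L}$.
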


\begin{proof}[Proof of Theorem \ref{thm:4}]
Fix an arbitrary $x_0\in\mathbb{R}^n$. Then, by Lemma \ref{lm:3}, there exists $C_{E,M,V}$ such that (\ref{eq:lm3}) holds, whence since $\phi \geq 1$, we have uniformly over all $x\in B_{1/2}(x_0)$:
\begin{align}
       ||\psi & \phi((1-\epsilon) \rho_E)||_{\infty; B_{1/2}(x_0)} \leq C_{E,V}\left( \sup_{x\in B_1(x_0)}\phi((1-\epsilon)\rho_E ) \right)||\psi||_{L^2(B_1(x_0))} \nonumber \\
        &\leq C_{E,V}\left( \sup_{x, y\in B_1(x_0)}\frac{\phi((1-\epsilon)\rho_E(x))}{\phi((1-\epsilon)\rho_E(y))} \right) \cdot||\psi\phi((1-\epsilon)\rho_E)||_{L^2(B_1(x_0))} \label{eq:pointwisebound_start}
\end{align}
By hypothesis in (\ref{eq:mphi}), $\phi$ has bounded logarithmic derivative, thus
\begin{equation}
 |\log(\phi(s))-\log(\phi(t))|\leq M_\phi \cdot |s-t|~, ~\forall s,t\in [0,\infty)~.
\end{equation}
In particular, for all $x,y \in B_1(x_0)$, we conclude
\begin{align}
    \left\vert\log\left(\frac{\phi((1-\epsilon)\rho_E(x))}{\phi((1-\epsilon)\rho_E(y))}\right)\right\vert &\leq M_\phi \cdot(1-\epsilon)|\rho_E(x)-\rho_E(y)|\\
    &\leq M_\phi \cdot(1-\epsilon)\rho_E(x,y)\\
    &\leq 2 M_\phi (1-\epsilon)c~, \label{eq:pointwise_metricbound}
\end{align}
where $c = (\max{V}-E)^{1/2}_+$ using that $V$ was assumed to be bounded. Observe that the upper bound in (\ref{eq:pointwise_metricbound}) is {\em{uniform}} in $x_0 \in \mathbb{R}^n$. 

In summary, we obtain
\begin{equation} \label{eq:pointwise_metricbound_1}
    \sup_{x,y\in B_1(x_0)}\left\vert\frac{\phi((1-\epsilon)\rho_E(x))}{\phi((1-\epsilon)\rho_E(y))}\right\vert \leq \mathrm{e}^{2M_\phi (1-\epsilon)c}~,
\end{equation}
which, combined with (\ref{eq:pointwisebound_start}), yields
\begin{align} \label{eq:pointwisebound_end}
    ||\psi & \phi((1-\epsilon)\rho_E)||_{\infty; B_{1/2}(x_0)} \leq C_{E,V}\mathrm{e}^{2M_\phi(1-\epsilon)c}\cdot||\psi\phi((1-\epsilon)\rho_E)||_{L^2(B_1(x_0))} \nonumber \\
    &\leq C_{E,V}\mathrm{e}^{2M_\phi(1-\epsilon)c}\cdot||\psi\phi((1-\epsilon)\rho_E)||_{L^2(\mathbb{R}^n)} ~\mbox{.}
\end{align}
Since $x_0 \in \mathbb{R}^n$ was arbitrary and the right-most side of (\ref{eq:pointwisebound_end}) is independent of $x_0$, we obtain the claim in (\ref{eq:thm4}). 
\end{proof}


\end{document}